\documentclass[a4paper,11pt,reqno]{amsart}
\usepackage{amsmath,amssymb,url,stmaryrd}
\usepackage[T1]{fontenc}
\usepackage{lmodern}
\usepackage[top=30truemm,bottom=30truemm,left=20truemm,right=20truemm]{geometry}
\usepackage{tikz}
\usetikzlibrary{positioning}
\usetikzlibrary{arrows.meta}
\usetikzlibrary{calc}
\usepackage{color}
\usepackage{enumitem}
\usepackage{hyperref}

\newenvironment{enumerate2}{\begin{enumerate}[label=\textup{(\alph*)}]}
{\end{enumerate}}

\date{\today}

\newtheorem{Thm}{Theorem}[section]
\newtheorem{Cor}[Thm]{Corollary}
\newtheorem{Lem}[Thm]{Lemma}
\newtheorem{Prop}[Thm]{Proposition}
\newtheorem{Conj}[Thm]{Conjecture}
\newtheorem{Def-Lem}[Thm]{Definition-Lemma}
\newtheorem{Def-Prop}[Thm]{Definition-Proposition}

\theoremstyle{definition}
\newtheorem{Def}[Thm]{Definition}

\newtheorem{Ex}[Thm]{Example}

\newcommand{\calC}{\mathcal{C}}
\newcommand{\calD}{\mathcal{D}}
\newcommand{\calF}{\mathcal{F}}

\newcommand{\calS}{\mathcal{S}}
\newcommand{\calT}{\mathcal{T}}

\newcommand{\calW}{\mathcal{W}}

\newcommand{\calZ}{\mathcal{Z}}

\newcommand{\ovcalF}{\overline{\calF}}
\newcommand{\ovcalT}{\overline{\calT}}

\newcommand{\sfD}{\mathsf{D}}
\newcommand{\sfK}{\mathsf{K}}

\newcommand{\rmb}{\mathrm{b}}

\newcommand{\R}{\mathbb{R}}
\newcommand{\Z}{\mathbb{Z}}

\DeclareMathOperator{\Hom}{\mathsf{Hom}}
\DeclareMathOperator{\End}{\mathsf{End}}

\DeclareMathOperator{\Ker}{\mathsf{Ker}}

\DeclareMathOperator{\module}{\mathsf{mod}} \renewcommand{\mod}{\module}
\DeclareMathOperator{\rep}{\mathsf{rep}}

\DeclareMathOperator{\proj}{\mathsf{proj}}

\DeclareMathOperator{\inj}{\mathsf{inj}}

\DeclareMathOperator{\add}{\mathsf{add}}

\DeclareMathOperator{\Filt}{\mathsf{Filt}}
\DeclareMathOperator{\Fac}{\mathsf{Fac}}
\DeclareMathOperator{\Sub}{\mathsf{Sub}}

\DeclareMathOperator{\tors}{\mathsf{tors}}
\DeclareMathOperator{\ftors}{\mathsf{f-tors}}
\DeclareMathOperator{\torf}{\mathsf{torf}}

\DeclareMathOperator{\sbrick}{\mathsf{sbrick}}
\DeclareMathOperator{\wide}{\mathsf{wide}}

\DeclareMathOperator{\twosilt}{2\mathsf{-silt}}

\DeclareMathOperator{\twopsilt}{2\mathsf{-psilt}}

\DeclareMathOperator{\sfT}{\mathsf{T}}
\DeclareMathOperator{\sfF}{\mathsf{F}}

\DeclareMathOperator{\sfdim}{\mathsf{dim}}
\renewcommand{\dim}{\sfdim}

\DeclareMathOperator{\Irr}{\mathsf{Irr}}

\newcommand{\cw}{\mathrm{cw}}

\DeclareMathOperator{\cone}{\mathsf{cone}}

\renewcommand{\epsilon}{\varepsilon}
\renewcommand{\Gamma}{\varGamma}
\renewcommand{\Lambda}{\varLambda}
\renewcommand{\Phi}{\varPhi}
\renewcommand{\phi}{\varphi}

\numberwithin{equation}{section}

\begin{document}
\title[Bicompact torsion classes and
conjectures on brick infinite algebras]
{Bicompact torsion classes and \\
conjectures on brick infinite algebras}

\author{Sota Asai} 
\address{Sota Asai: Graduate School of Mathematical Sciences,
the University of Tokyo,  
3-8-1 Komaba, Meguro-ku, Tokyo-to, 153-8914, Japan}
\email{sotaasai@g.ecc.u-tokyo.ac.jp}

\begin{abstract}
A torsion class $\calT$ of the module category $\mod A$ of a finite dimensional algebra $A$ over a field $K$ is said to be \emph{compact}
if there exists a module $M \in \mod A$ such that $\calT$ 
is the smallest torsion class containing $M$.
If a torsion class satisfies this and the dual condition,
then we call it a \emph{bicompact} torsion class.
We conjecture that bicompact torsion classes are precisely 
functorially finite torsion classes,
and prove it for hereditary algebras and also for semistable torsion classes.
This gives that Demonet Conjecture implies Enomoto Conjecture,
both of which are important conjectures on brick infiniteness.

\end{abstract}

\maketitle 

\section{Introduction}

For a finite dimensional algebra $A$ over a field $K$,
classification of full subcategories satisfying certain conditions 
of the category $\mod A$ of finitely generated modules 
is one of the most important problems.
Especially, researchers actively have investigated
\emph{torsion classes} $\calT \subset \mod A$,
that is, full subcategories closed under taking factor modules and extensions.
For each torsion class $\calT$, we uniquely have
the corresponding \emph{torsion-free class} $\calF$
which forms a \emph{torsion pair} $(\calT,\calF)$ in $\mod A$.

We write $\tors A$ for the set of torsion classes.
In general, it is a very hard problem to classify all $\calT \in \tors A$.
The most fundamental family of torsion classes are
\emph{functorially finite} torsion classes.
By \cite{Smalo}, $\calT \in \tors A$ is functorially finite
if and only if 
there exists some module $M \in \mod A$ such that $\calT=\Fac M$,
where $\Fac M$ is the full subcategory of factor modules of 
$M^{\oplus s}$ ($s \ge 0$).

There are several kinds of generalizations of 
functorially finite torsion classes.
In this paper, we focus on the following families of torsion classes:
\begin{enumerate}
\item
compact (cocompact, bicompact) torsion classes;
\item
semistable torsion classes.
\end{enumerate}

We first give the definitions of compact torsion classes and related notions.
For each $M \in \mod A$, we write $\sfT(M)$ (resp.~$\sfF(M)$) for
the smallest torsion class (resp.~torsion-free class) 
containing $M$.

\begin{Def}[Definition \ref{Def_compact}]
Let $\calT \in \tors A$.
Then $\calT$ is said to be \emph{compact}
if there exists $M \in \mod A$ such that $\calT=\sfT(M)$,
and said to be \emph{cocompact}
if there exists $N \in \mod A$ such that $\calT^\perp=\sfF(N)$,
and said to be \emph{bicompact}
if $\calT$ is both compact and cocompact.
\end{Def}

Functorially finite classes are always bicompact \cite{Smalo}.
We expect that the converse also holds.

\begin{Conj}\label{Conj_bicompact}
Let $\calT \in \tors A$.
Then $\calT$ is bicompact if and only if $\calT$ is functorially finite.
\end{Conj}

The following is our first main theorem of this paper,
which gives a partial positive answer.

\begin{Thm}[Theorem \ref{Thm_hered}]\label{Thm_hered_intro}
Assume that $K$ is algebraically closed, and that $A$ is hereditary.
Then Conjecture \ref{Conj_bicompact} holds;
that is, if $\calT \in \tors A$ is bicompact,
then $\calT$ is functorially finite.
\end{Thm}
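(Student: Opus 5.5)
Let $A$ be hereditary over an algebraically closed field, and let $\calT=\sfT(M)$ with $\calT^\perp=\sfF(N)$. My plan is to reduce the statement to one claim: a torsion class that is compact but not functorially finite cannot also be cocompact. The approach is to show that the torsion-free part is then generated by infinitely many indecomposables "near the regular part", and that no single module can generate it.

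The first step is a general reduction. Over a hereditary algebra, $\calT=\sfT(M)$ equals $\Filt(\Fac M)$. Moreover, by the theory of $\tau$-tilting reduction, the functorially finite torsion classes are exactly $\Fac T$ for $T$ support $\tau$-tilting. Take a maximal functorially finite torsion class $\calT_0=\Fac T_0\subseteq\calT$ obtained from the preprojective-type part. Concretely, I would use that the indecomposable summands of $M$ split into preprojective, regular and preinjective ones. For a hereditary algebra, $\Fac$ of preinjective modules is functorially finite, and preprojectives give only finitely many constraints.

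The key dichotomy is as follows. If $\calT$ contains no regular brick outside a finite set of tubes and no infinite family, then standard arguments over hereditary algebras, together with Ringel's description of torsion classes via bricks, give that $\calT$ is functorially finite. Otherwise I aim to show that $\calT$ is not cocompact.

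In the second step I work in the reduced setting. Using the bijection of $\tau$-tilting reduction between the interval $[\calT_0,{}^\perp(\tau T_0)\cap\ldots]$ and torsion classes of a smaller hereditary algebra (the Jasso reduction; its perpendicular category is again hereditary), I would reduce, by induction on the number of simples, to the case where $\calT$ has no nonzero Ext-projective "preprojective" summand. In the minimal non-finite case the algebra is then tame, essentially of Kronecker or Euclidean type. There $\calT=\sfT(M)$ with $M$ containing a regular summand means that $\calT$ contains all preinjectives and a collection of tubes, including whole quasi-simple families in all but finitely many tubes. Hence $\calT^\perp$ consists of preprojectives together with regular modules in finitely many tubes.

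Next I compute $\sfF(N)$ for any $N$. Since $\sfF(N)=\Filt(\Sub N)$, and a finite $N$ involves only finitely many tubes and finitely many preprojectives, the non-preprojective part of $\sfF(N)$ lies in finitely many tubes. If $\calT^\perp$ is not functorially finite, it must contain infinitely many preprojectives. These have unbounded dimension vectors, for instance in the direction of the null root, and lie in $\calT^\perp$ precisely because $\calT$ misses the corresponding regular modules. I expect to show that $\Sub N$ for finite $N$ contains only preprojectives of bounded defect-type, and that closing under extensions cannot produce the needed ones without also producing regular modules that lie in $\calT$. This yields the contradiction.

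The main obstacle is making this last comparison precise in the wild case, where no tube structure is available. For that I would instead use a semistability argument. Bicompactness gives a stability vector $\theta=[M]-[\nu N]$-like in the $g$-vector fan, and non-functorial-finiteness forces $\theta$ to lie on the boundary of the fan. On that boundary, the Kac/Schofield theory of canonical decompositions for hereditary algebras provides an imaginary Schur root. Its generic representations would give a family of bricks lying in neither $\calT$ nor $\calT^\perp$ while being forced into one of them, which is the desired contradiction.
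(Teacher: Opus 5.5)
Your proposal has a genuine gap. The part you make concrete relies on tube combinatorics, and the part meant to cover the general case assumes exactly the step that needs proof.

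\textbf{Where the proposal fails.} Your reduction ``by induction on the number of simples'' to a tame base case does not work. For the $m$-Kronecker algebra with $m\ge 3$ (wild, two simples), a torsion class containing all preinjectives and no preprojectives lies in no nontrivial $\tau$-tilting reduction interval. So the base case of your induction can be wild.

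Your tame analysis is also reversed. For the Kronecker algebra and a regular simple $R_\lambda$, $\sfT(R_\lambda)$ is the additive closure of the preinjectives and the single tube at $\lambda$. Meanwhile $\calT^\perp$ contains the preprojectives and \emph{all other} tubes. The obstruction to cocompactness is therefore the infinitely many tubes in $\calT^\perp$, not preprojectives of unbounded dimension.

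In your wild-case sketch there are three problems:
\begin{itemize}
\item The claim ``bicompactness gives a stability vector $\theta$'' is the crux, and you leave it unjustified. A priori a bicompact torsion class need not be of the form $\calT_\theta$ or $\ovcalT_\theta$.
\item Non-rigid $\theta$ need not lie on the boundary of the $g$-fan; for the $m$-Kronecker algebra with $m\ge3$ they fill a cone with nonempty interior.
\item The final ``contradiction'' is not one, since nothing forces a brick to lie in $\calT\cup\calT^\perp$.
\end{itemize}

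\textbf{What is missing.} You need a mechanism by which bicompactness is actually used.
\begin{itemize}
\item Write $\calT=\sfT(M)$ and $\calT^\perp=\sfF(N)$. Then $\calT={}^\perp N$ and $\calT^\perp=M^\perp$, and by upper semicontinuity of $\dim_K\Hom$ both are open conditions on $\rep(Q,d)$.
\item Since $\rep(Q,d)$ is irreducible for $A=KQ$, two nonempty open subsets would meet in $\calT\cap\calT^\perp=\{0\}$, which is impossible for $d\neq0$. Hence no $X\in\calT$, $Y\in\calT^\perp$ satisfy $[X]=[Y]\neq0$; that is, $\calT$ is numerically disjoint.
\item Compactness and cocompactness make $\cone\calT$ and $\cone\calT^\perp$ polyhedral: every module in $\calT$ is filtered by factor modules of $M$, and every module in $\calT^\perp$ by submodules of $N$.
\item Numerical disjointness then yields $\theta$ positive on $\cone\calT\setminus\{0\}$ and negative on $\cone\calT^\perp\setminus\{0\}$. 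This gives $\calT\subset\calT_\theta$ and $\calT^\perp\subset\calF_\theta$, hence $\calT=\calT_\theta$.
\item It remains to show that $\calT_\theta=\sfT(M)$ forces $\theta$ to be rigid. The paper does this not via Kac--Schofield theory but with an open-set argument. The set $E=\{\eta<_{\cw}\theta\mid M\in\calT_\eta\}$ is open. It lies in a single TF equivalence class, since $\sfT(M)\subset\calT_\eta\subset\ovcalT_\eta\subset\calT_\theta=\sfT(M)$ for $\eta\in E$. Hence $E$ lies in some chamber $C^\circ(T)$, and $\theta\in\overline{E}\subset C(T)$.
\end{itemize}
A rigid $\theta$ then gives a functorially finite $\calT=\calT_\theta$.
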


To show this, we establish more general thoery by using
\emph{semistable torsion classes} \cite{BKT}.
We consider the category $\proj A$ of finitely generated projective modules,
and its real Grothendieck group $K_0(\proj A)_\R$.
Each $\theta \in K_0(\proj A)_\R$ is identified with
an $\R$-linear form $\theta \colon K_0(\mod A)_\R \to \R$
via the \emph{Euler bilinear form}
$K_0(\proj A)_\R \times K_0(\mod A)_\R \to \R$.
This gives rise to 
two torsion classes $\calT_\theta \subset \ovcalT_\theta$ defined by
\begin{align*}
\calT_\theta&:=\{M \in \mod A \mid
\text{$\theta(N)>0$ for any nonzero factor module $N \ne 0$ of $M$}\},\\
\ovcalT_\theta&:=\{M \in \mod A \mid
\text{$\theta(N) \ge 0$ for any factor module $N$ of $M$}\}.
\end{align*}
We remark that $\calW_\theta:=\ovcalT_\theta \cap (\calT_\theta)^\perp$
is nothing but the \emph{$\theta$-semistable subcategory}
introduced in \cite{King}.

Functorially finite torsion classes are always realized
as semistable torsion classes.
Thanks to silting theory \cite{KV,AIR},
every functorially finite torsion class $\calT$
is of the form $\calT=\Fac H^0(T)$,
where $T$ is a 2-term silting complex 
in the perfect derived category $\sfK^\rmb(\proj A)$.
Writing $T=\bigoplus_{i=1}^n T_i$ with $T_i$ indecomposable,
we have the \emph{silting cone} 
$C^\circ(T):=\sum_{i=1}^n \R_{>0}[T_i]$ in $K_0(\proj A)_\R$.
Then any $\theta \in C^\circ(T)$ satisfies
$\calT_\theta=\ovcalT_\theta=\Fac H^0(T)$ \cite{BST,Yurikusa}.

The closures of silting cones and its faces 
form a non-singular fan in $K_0(\proj A)_\R$ 
called the \emph{$g$-fan} \cite{AHIKM}.
As explained later, this fan is not necessarily complete.
The \emph{wall-chamber structure} by \cite{BST}
and the \emph{TF equivalence} by \cite{A-wc} on $K_0(\proj A)_\R$ can be
considered as a completion of the $g$-fan.
See also \cite{KT} for these notions.

The following our second main result gives an affirmative answer
for semistable torsion classes $\calT_\theta$ and $\ovcalT_\theta$.
Here, $\theta \in K_0(\proj A)_\R$ is said to be \emph{rigid}
if $\theta$ is in the support of the $g$-fan.

\begin{Thm}[Theorems \ref{Thm_semistable_bicompact} and 
\ref{Thm_ovT_lattice_bicompact}]\label{Thm_bicompact_intro}
Let $\theta \in K_0(\proj A)_\R$.
Then the following conditions are equivalent.
\begin{enumerate2}
\item
The element $\theta$ is rigid.
\item
The torsion class $\calT_\theta$ is bicompact.
\item
The torsion class $\calT_\theta$ is compact.
\item
The torsion class $\calT_\theta$ is functorially finite.
\item
The torsion class $\ovcalT_\theta$ is bicompact.
\item
The torsion class $\ovcalT_\theta$ is cocompact.
\item
The torsion class $\ovcalT_\theta$ is functorially finite.
\end{enumerate2}
If $K$ is algebraically closed and $\theta \in K_0(\proj A)$, 
then they are equivalent also to the following.
\begin{enumerate2}
\item[\textup{(h)}]
The torsion class $\calT_\theta$ is cocompact.
\item[\textup{(i)}]
The torsion class $\ovcalT_\theta$ is compact.
\end{enumerate2} 
\end{Thm}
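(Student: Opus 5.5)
We organize the proof as a chain of implications among (a)--(g), and then bring in (h) and (i) under the extra hypotheses.

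\textbf{The easy implications.} These come from known silting theory.
\begin{itemize}
\item (a)$\Rightarrow$(d),(g): if $\theta$ is rigid, it lies in the relative interior of some face $C^\circ(U)$ of the $g$-fan, with $U$ a 2-term presilting complex. By \cite{BST,Yurikusa}, $\calT_\theta=\Fac H^0(U)$ is then the smallest torsion class in the interval attached to $U$ (the Bongartz co-completion side), and $\ovcalT_\theta={}^\perp H^{-1}(\nu U)$ is the largest. Both are functorially finite by \cite{AIR}.
\item (d)$\Rightarrow$(b)$\Rightarrow$(c) and (g)$\Rightarrow$(e)$\Rightarrow$(f): these follow from \cite{Smalo} (functorially finite implies bicompact) and the definitions.
\end{itemize}
So the substance is (c)$\Rightarrow$(a) and (f)$\Rightarrow$(a).

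\textbf{(c)$\Rightarrow$(a).} Suppose $\calT_\theta=\sfT(M)$. Since $M\in\calT_\theta$, every nonzero factor $N$ of $M$ satisfies $\theta(N)>0$. Because $M$ has only finitely many composition-type constraints, there should be a small open neighbourhood $U$ of $\theta$ with $\eta(N)>0$ for all such $N$. The key finiteness point is that the dimension vectors of factors of $M$ form a finite set, so strictness is open. Hence $M\in\calT_\eta$, and $\calT_\theta\subset\calT_\eta$ for all $\eta\in U$.

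We also need $\calT_\eta\subset\ovcalT_\theta$. If $\eta\to\theta$, then for any $X\in\calT_\eta$ and factor $N$, $\eta(N)>0$. By the Jordan–Hölder filtration argument of the wall-chamber structure \cite{BST,A-wc}, $\calT_\eta\subset\ovcalT_\theta$ for $\eta$ near $\theta$; I would cite the TF-equivalence results here.

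This squeezes $\calT_\theta\subset\calT_\eta\subset\ovcalT_\theta$. Now pick $\eta$ in $U$ generic in a direction where $\calW_\theta$ is cut. The aim is to show that the TF equivalence class of $\theta$ has nonempty interior in an appropriate sense, so that $\theta$ lies in the closure of finitely many chambers. Chambers with nonempty interior are silting cones \cite{A-wc}, hence $\theta$ is rigid.

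More precisely, I would prove that $\calT_\eta=\calT_\theta$ for all $\eta$ near $\theta$ on the side where $\calT_\theta\supset\calT_\eta$. Combined with the above, this shows that the torsion classes $\calT_\eta$ take the same value on an open set adjacent to $\theta$. That forces $\theta$ to lie in the closure of an open TF class, hence in some $C(T)$.

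\textbf{(f)$\Rightarrow$(a).} This is dual: replace $\theta$ by $-\theta$, using $(\ovcalT_\theta)^\perp=\calF_{-\theta}$ type duality. Equivalently, apply the duality $D\colon\mod A\to\mod A^{\mathrm{op}}$, which swaps $\calT_\theta$ and the torsion-free class of $\ovcalT_\theta$ and preserves rigidity \cite{AIR}.

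\textbf{(h) and (i).} These require $\theta$ integral and $K$ algebraically closed. The implications (b)$\Rightarrow$(h) and (e)$\Rightarrow$(i) are trivial. For the converse (h)$\Rightarrow$(a), with $\calT_\theta^\perp=\sfF(N)$, there are two cases.
\begin{itemize}
\item If $\calW_\theta=0$, then $\calT_\theta=\ovcalT_\theta$ is cocompact, and we conclude by (f)$\Rightarrow$(a).
\item Otherwise $\calW_\theta\neq0$. For integral non-rigid $\theta$ over algebraically closed $K$, I expect there are infinitely many $\theta$-stable bricks, or a one-parameter family of them (via a generic decomposition / King moduli argument, as in the known results that non-rigid integral $\theta$ admit a band-like family). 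The finitely many composition factors of $N$ cannot generate $\calF$ containing all of them. This contradicts cocompactness.
\end{itemize}

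\textbf{Main obstacle.} This final step, producing infinitely many non-isomorphic stable modules in $\calW_\theta$ for integral non-rigid $\theta$ and showing they escape $\sfF(N)$, is where I expect the most difficulty. I would first try using a generic presentation of $\theta$ and its canonical decomposition to build a family parametrized by an infinite variety over $K$.
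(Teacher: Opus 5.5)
\textbf{Part (a)--(g).} Several pieces of your proposal are fine:
\begin{itemize}
\item the easy implications (a)$\Rightarrow$(d),(g) and the Smal{\o} chain;
\item the reduction of (f)$\Rightarrow$(a) to (c)$\Rightarrow$(a) via $D$;
\item the first half of (c)$\Rightarrow$(a): $E_1=\{\eta \mid M\in\calT_\eta\}$ is an open neighbourhood of $\theta$, so $\calT_\theta=\sfT(M)\subset\calT_\eta$ on it.
\end{itemize}
Your target is also the right one: an open set inside one TF class with $\theta$ in its closure. Such a set lies in some $C^\circ(T)$ by the chamber theorem, so $\theta\in C(T)$ and $\theta\in C^\circ(U)$ for a summand $U$.

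However, the step that should produce this open set is missing. The inclusion you propose to cite, $\calT_\eta\subset\ovcalT_\theta$ for all $\eta$ near $\theta$, is not a general wall-chamber fact. Take the Kronecker algebra and $\theta$ on its non-rigid ray. Then $\ovcalT_\theta$ consists of regular and preinjective modules. But for $\eta$ arbitrarily close to $\theta$ in the chambers on one side, $\calT_\eta$ contains preprojective modules of large dimension. Under (c) the inclusion holds only a posteriori, and its natural proof needs $\calF_\theta=(\ovcalT_\theta)^\perp$ to be compact, which is (f), not (c).

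Even granted, the squeeze $\calT_\theta\subset\calT_\eta\subset\ovcalT_\theta$ does not make $\calT_\eta$ constant when $\calW_\theta\neq0$. Constancy of $\calT_\eta$ alone is also not TF equivalence. Your sentence ``I would prove $\calT_\eta=\calT_\theta$ on the side where $\calT_\theta\supset\calT_\eta$'' just restates what is needed.

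The missing idea is elementary. Take the open cone $E_2=\{\eta \mid \theta-\eta\in\sum_i\R_{>0}[P(i)]\}$. For $\eta\in E_2$ one has $\theta(X)>\eta(X)$ for every nonzero $X$, hence $\ovcalT_\eta\subset\calT_\theta$. On $E=E_1\cap E_2$ this gives
$\calT_\theta\subset\calT_\eta\subset\ovcalT_\eta\subset\calT_\theta$.
So $E$ is open and lies in a single TF class. Moreover $\theta\in\overline{E}$, because $E_1$ is a neighbourhood of $\theta$ and $\theta\in\overline{E_2}$.

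\textbf{Part (h), (i).} Your argument here does not work. In the case $\calW_\theta\neq0$ you hope to contradict $\calT_\theta^\perp=\sfF(N)$ by producing infinitely many $\theta$-stable bricks. But there is nothing to contradict:
\begin{itemize}
\item $\calW_\theta\subset\ovcalF_\theta=\sfF(N)$ holds automatically, however many stable modules there are;
\item a class $\sfF(N)$ can contain infinitely many bricks, e.g.\ $\sfF(DA)$ is all of $\mod A$.
\end{itemize}

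What the lattice-point and algebraically-closed hypotheses actually supply is the GIT description (Fei; Asai--Iyama)
$\ovcalT_\theta=\bigcup_{l\ge1,\ f\in\Hom(l\theta)}{}^\perp(\Ker\nu f)$.
With it, (i)$\Rightarrow$(f) is immediate. If $\ovcalT_\theta=\sfT(M)$, pick $f$ with $M\in{}^\perp(\Ker\nu f)$. Then $\ovcalT_\theta=\sfT(M)\subset{}^\perp(\Ker\nu f)\subset\ovcalT_\theta$. Finally ${}^\perp(\Ker\nu f)$ is cocompact, with torsion-free part $\sfF(\Ker\nu f)$. Dually one gets (h)$\Rightarrow$(c). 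No moduli of stable bricks is needed.
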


For $\calT \in \tors A$,
we say that $\calT$ is a \emph{numerically disjoint} torsion class
if there exist no $X \in \calT$ and $Y \in \calT^\perp$
such that $[X]=[Y] \ne 0$ in $K_0(\mod A)$.
As a consequence of Theorem \ref{Thm_bicompact_intro},
we get the following property,
which immediately implies Theorem \ref{Thm_hered_intro}.

\begin{Thm}[Theorem \ref{Thm_bicom_num_dis}]
\label{Thm_bicom_num_dis_intro}
Let $\calT \in \tors A$.
If $\calT$ is bicompact and numerically disjoint,
then $\calT$ is functorially finite.
\end{Thm}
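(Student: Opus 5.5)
Given a bicompact $\calT=\sfT(M)$ with $\calT^\perp=\sfF(N)$, the plan is to produce some $\theta\in K_0(\proj A)_\R$ with $\calT_\theta\subseteq\calT\subseteq\ovcalT_\theta$. We then use compactness of $\calT$ to force $\calT=\ovcalT_\theta$, or cocompactness to force $\calT=\calT_\theta$. After that, Theorem \ref{Thm_bicompact_intro} finishes the argument: cocompactness of $\calT_\theta$ alone is only known to imply rigidity under extra hypotheses, so the plan is to arrange the equality with $\ovcalT_\theta$ (compact side) or with $\calT_\theta$ (cocompact side) as convenient, via (f)$\Rightarrow$(g) and (c)$\Rightarrow$(d).

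The first step is a separation argument. Consider the closed convex cones in $K_0(\mod A)_\R$:
\[
C_+:=\mathrm{cone}\{[X]\mid X\in\calT\},\qquad C_-:=\mathrm{cone}\{[Y]\mid Y\in\calF\},\quad \calF=\calT^\perp.
\]
Compactness is what makes these cones finitely generated. Every $X\in\sfT(M)$ is filtered by factor modules of $M$, and every factor of $M$ has a class that is a nonnegative combination of composition factor classes. So $C_+$ is contained in the cone generated by the classes $[M']$ for $M'$ ranging over the (finitely many up to the relevant dimension vectors) quotients of $M$ lying in $\calT$. A more robust way to say this is that $C_+$ is generated by the classes of bricks in $\calT$ that are quotients of $M$, and those classes are bounded by $\dim M$, hence finite in number. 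Dually, $C_-$ is generated by finitely many classes of submodules of $N$. Both cones are therefore rational polyhedral.

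Numerical disjointness gives $C_+\cap C_-=\{0\}$ at the level of integral vectors. We want to upgrade this to the statement for the rational polyhedral cones. Suppose $\sum a_i[X_i]=\sum b_j[Y_j]\neq0$ with rational, hence after clearing denominators integral, coefficients. Then $X=\bigoplus X_i^{a_i}\in\calT$ and $Y=\bigoplus Y_j^{b_j}\in\calF$ have equal classes, a contradiction. Hence, by finite generation, the two pointed polyhedral cones meet only at $0$. By Farkas/separation there is a $\theta$, which we may take rational, with $\theta>0$ on $C_+\setminus\{0\}$ and $\theta<0$ on $C_-\setminus\{0\}$. Here we identify $\theta$ with a form via the Euler pairing, which is nondegenerate since $K_0(\proj A)$ and $K_0(\mod A)$ are dual.

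Now we check the inclusions. For $X\in\calT$, every nonzero factor of $X$ lies in $\calT$, so $\theta$ is positive on it; thus $\calT\subseteq\calT_\theta$. For $M'\in\calT_\theta$, its torsion-free quotient lies in $\calF$, so $\theta\le0$ on it, and it is a factor of $M'$, so $\theta>0$ unless it is zero; thus $M'\in\calT$. Hence $\calT=\calT_\theta$, and compactness together with (c)$\Rightarrow$(d) of Theorem \ref{Thm_bicompact_intro} shows that $\calT$ is functorially finite.

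The main obstacle is the finite generation of $C_+$ and $C_-$. Without it, the strict separation could fail on limiting rays, and we would only get a weak inequality $\theta\ge0$. I would first try to prove that $C_+$ equals the cone on the classes of the finitely many composition-type "brick quotients" of $M$ described above. Failing that, I would settle for a weak separation and combine the resulting $\calT_\theta\subseteq\calT\subseteq\ovcalT_\theta$ with compactness and cocompactness to pin down $\calT$.
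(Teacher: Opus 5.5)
Your proof is correct and is essentially the paper's argument. Compactness and cocompactness make $\cone\calT$ and $\cone\calT^\perp$ polyhedral (Lemma \ref{Lem_compact_poly} and its dual), and numerical disjointness plus separation yields $\theta$ with $\calT\subseteq\calT_\theta$ and $\calT^\perp\subseteq\ovcalF_\theta$ (Lemma \ref{Lem_C_F_C_S}), so $\calT=\calT_\theta$ and Theorem \ref{Thm_semistable_bicompact} finishes. Your opening paragraph muddles which side goes with which property, and the weak-separation fallback is unnecessary, but the argument you actually execute (showing $\calT=\calT_\theta$ is compact, then applying (c)$\Rightarrow$(d)) is right.
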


As an application of Theorem \ref{Thm_bicompact_intro},
we show a certain implication between open questions on brick infiniteness.

If the endomorphism algebra $\End_A(M)$ of $M \in \mod A$ 
is a division $K$-algebra,
then $M$ is called a \emph{brick}.
A \emph{semibrick} is defined as a set of isoclasses of bricks
which are pairwisely Hom-orthogonal.
Semibricks are in bijection with wide subcategories of $\mod A$
\cite{Ringel1}, and their relationship with torsion classes
are studied in many works including \cite{AP,BCZ,BH,DIRRT,Hanson,IT,Jasso,MS}.

An algebra $A$ is said to be \emph{brick infinite}
if there are infinitely many isoclasses of bricks.
Brick (in)finiteness of algebras are characterized 
in many ways by using torsion classes and 2-term silting complexes;
see Proposition \ref{Prop_brick_infin} 
and Corollary \ref{Cor_brick_infin_tors}
in this paper by \cite{AS,A-wc,DIJ,Ringel2,Sentieri} for example.
Moreover, there are many conjectures 
for brick infinite algebras \cite{MP,STV,Pfeifer}.
In this paper, we consider the following two conjectures.

The first one was proposed by Enomoto \cite{Enomoto}.
A semibrick consisting of infinitely many isoclasses of bricks
is called an \emph{infinite semibrick}.
If there exists an infinite semibrick in $\mod A$,
then $A$ is clearly brick infinite.
Enomoto conjectured that the converse also holds,
and gave a partial answer.

\begin{Conj}[Enomoto Conjecture, Conjecture \ref{Conj_Enomoto}]
\label{Conj_Enomoto_intro}
Let $A$ be brick infinite.
Then there exists an infinite semibrick in $\mod A$.
\end{Conj}

The other conjecture was suggested by Demonet \cite{Demonet}.
In \cite{A-wc}, we proved that $A$ is brick infinite if and only if 
there exists $\theta \in K_0(\proj A)_\R$ which is not rigid.
He strengthened this to the following conjecture.

\begin{Conj}[Demonet Conjecture, Conjecture \ref{Conj_Demonet}]
\label{Conj_Demonet_intro}
Let $A$ be brick infinite.
Then there exists a lattice point 
$\theta \in K_0(\proj A)$ which is not rigid.
\end{Conj}

One of the reasons why lattice points are important is that
the \emph{presentation space} $\Hom(\theta)$
for $\theta \in K_0(\proj A)$ is available.
There are many studies including 
\cite{AsI,DF,Fei,HY,Plamondon,PYK} in this direction.

Another advantage of lattice points is that
the torsion class $\ovcalT_\theta$ admits a semibrick $\calS$
such that $\ovcalT_\theta=\sfT(\calS)$ if $\theta \in K_0(\proj A)$
\cite{A-wc}.
By this and Theorem \ref{Thm_bicompact_intro},
we prove the following relationship between the two conjectures.

\begin{Thm}[Theorem \ref{Thm_Demonet_infin}]\label{Thm_Demonet_infin_intro}
Assume that the base field $K$ is algebraically closed.
Then Demonet Conjecture \ref{Conj_Demonet_intro} implies 
Enomoto Conjecture \ref{Conj_Enomoto_intro}.
\end{Thm}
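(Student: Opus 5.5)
Assume Demonet Conjecture \ref{Conj_Demonet_intro} holds for $A$. Then for a brick infinite $A$ we get a lattice point $\theta \in K_0(\proj A)$ which is not rigid. The plan is to build from $\theta$ an infinite semibrick inside the torsion class $\ovcalT_\theta$, and to produce infinitude from the failure of functorial finiteness via Theorem \ref{Thm_bicompact_intro}.

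\emph{Step 1.} Since $\theta$ is a lattice point, the cited result of \cite{A-wc} gives a semibrick $\calS$ with $\ovcalT_\theta=\sfT(\calS)$. One natural choice is the semibrick of bricks in the semistable subcategory $\calW_\theta$ together with bricks generating $\calT_\theta$ appropriately; I only need the existence statement as quoted.

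\emph{Step 2.} I claim $\calS$ is infinite. Suppose $\calS$ were finite. Then $M:=\bigoplus_{S \in \calS} S \in \mod A$ satisfies $\ovcalT_\theta=\sfT(M)$, so $\ovcalT_\theta$ is compact. This is condition (i) of Theorem \ref{Thm_bicompact_intro}, which applies because $K$ is algebraically closed and $\theta \in K_0(\proj A)$. By that theorem, (i) is equivalent to (a), so $\theta$ would be rigid, contradicting the choice of $\theta$. Hence $\calS$ consists of infinitely many isoclasses of pairwise Hom-orthogonal bricks, i.e.\ it is an infinite semibrick, and Enomoto Conjecture \ref{Conj_Enomoto_intro} holds for $A$.

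\emph{Main obstacle.} Everything rests on the implication (i)$\Rightarrow$(a) in Theorem \ref{Thm_bicompact_intro}, namely that compactness of $\ovcalT_\theta$ for a lattice point forces rigidity. This uses the hypothesis that $K$ is algebraically closed, which is why it appears in the statement. Given that theorem as proved earlier, the remaining step to check is that the semibrick of \cite{A-wc} is a set of isoclasses of bricks, so that its finiteness is equivalent to the existence of a single module $M$ generating $\ovcalT_\theta$ as a torsion class.

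If needed, I would add the routine observation that $\sfT$ of a finite set of modules equals $\sfT$ of their direct sum. This holds because a torsion class is closed under direct sums and direct summands, the latter being factor modules.
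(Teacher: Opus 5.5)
Your proposal is correct and follows the paper's proof: the semibrick $\calS$ with $\ovcalT_\theta=\sfT(\calS)$ comes from Proposition \ref{Prop_ovT_wide_gen}, and if $\calS$ were finite then $\ovcalT_\theta$ would be compact, so Theorem \ref{Thm_ovT_lattice_bicompact} (i)$\Rightarrow$(a) would make $\theta$ rigid, a contradiction. The speculative description of $\calS$ in your Step 1 is unnecessary, since only its existence is used.
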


The organization of this paper is as follows.
In Section \ref{Sec_Pre}, we define bicompact torsion classes,
and recall some basic properties on functorially finite torsion classes
and semistable torsion classes.
Section \ref{Sec_semistable} is devoted to 
the proof of Theorem \ref{Thm_bicompact_intro}.
In Section \ref{Sec_hered}, 
we show Theorem \ref{Thm_bicom_num_dis_intro}
by using numerically disjoint subcategories,
and deduce Theorem \ref{Thm_hered_intro}.
In Section \ref{Sec_Conj}, Theorem \ref{Thm_Demonet_infin_intro} is shown.

\subsection*{Convention}
In this paper, $K$ is a field,
and $A$ is a finite dimensional $K$-algebra.
We write $\mod A$ for the category of finitely generated $A$-modules,
and $\proj A$ for the category of finitely generated projective $A$-modules.

\subsection*{Acknowledgement}
The author thanks to Osamu Iyama, Kaveh Mousavand, Charles Paquette and
Claus Michael Ringel for useful discussion.
This work was supported by JSPS KAKENHI Grant Numbers JP23K12957. 

\section{Preliminary}\label{Sec_Pre}

For a full subcategory $\calC \subset \mod A$, we set
\begin{align*}
{^\perp \calC}:=\{X \in \mod A \mid \Hom_A(X,\calC)=0\}, \quad
{\calC^\perp}:=\{X \in \mod A \mid \Hom_A(\calC,X)=0\}.
\end{align*}

Let $\calT,\calF$ be full subcategories of $\mod A$.
Then the pair $(\calT,\calF)$ is called a \emph{torsion pair}
if $\calF=\calT^\perp$ and $\calT={^\perp \calF}$.
A full subcategory $\calT \subset \tors A$ is called 
a \emph{torsion class} in $\mod A$ 
if there exists a full subcategory $\calF \subset \mod A$
such that $(\calT,\calF)$ is a torsion pair
(in this case, $\calF=\calT^\perp$).
This is equivalent to that $\calT$ is closed under taking
factor modules and extensions in $\mod A$.
We write $\tors A$ for the set of all torsion classes in $\mod A$.
Dually, \emph{torsion-free classes} are defined,
and $\torf A$ denotes the set of all torsion-free classes in $\mod A$.

Let $\calC \subset \mod A$ be a full subcategory.
We write $\sfT(\calC)$ (resp.~$\sfF(\calC)$) for the smallest
torsion class (resp.~torsion-free class) containing $\calC$.
Under this notation, we have two torsion pairs
$(\sfT(\calC),\calC^\perp)$ and $({^\perp \calC},\sfF(\calC))$.
In the case $\calC=\{M\}$, we simply write $\sfT(M)$ and $\sfF(M)$.
We set
\begin{align*}
\Filt \calC&=\{ X \in \mod A \mid 
\text{there exists $0=X_0 \subset X_1 \subset \cdots \subset X_l=X$
with $X_i/X_{i-1} \in \calC$} \}, \\
\Fac \calC&=\{ X \in \mod A \mid 
\text{there exists a surjection $C^{\oplus s} \to X$ 
with $C \in \calC$ and $s \ge 0$}\}, \\
\Sub \calC&=\{ X \in \mod A \mid 
\text{there exists an injection $X \to C^{\oplus s}$ 
with $C \in \calC$ and $s \ge 0$}\}.
\end{align*}
Then we have $\sfT(\calC)=\Filt(\Fac \calC)$ and 
$\sfF(\calC)=\Filt(\Sub \calC)$;
see \cite[Lemma 3.1]{MS}.

We are mainly interested in the case $\calC=\{M\}$ for $M \in \mod A$.
The following notions are the main topics of this paper.

\begin{Def}\label{Def_compact}
Let $\calT \in \tors A$.
Then $\calT$ is said to be \emph{compact}
if there exists $M \in \mod A$ such that $\calT=\sfT(M)$,
and said to be \emph{cocompact}
if there exists $M \in \mod A$ such that $\calT^\perp=\sfF(M)$,
and said to be \emph{bicompact}
if $\calT$ is both compact and cocompact.
\end{Def}

Note the following easy example.

\begin{Ex}\label{Ex_left_perp}
For each $M \in \mod A$, 
we have a torsion pair $({^\perp M},\sfF(M))$,
so the torsion class ${^\perp M}$ is cocompact.
\end{Ex}

An additive full subcategory $\calC \subset \mod A$ is said to be 
\emph{functorially finite} if every $M \in \mod A$ admits
a left $\calC$-approximation and a right $\calC$-approximation.
We write $\ftors A$ for the set of functorially finite torsion classes.
The following property called the Smal{\o} symmetry is our start point.
In particular, the ``if'' part of Conjecture \ref{Conj_bicompact} holds true.

\begin{Prop}\label{Prop_Smalo}\cite[Theorem]{Smalo}
Let $(\calT,\calF)$ be a torsion pair in $\mod A$.
Then the following conditions are equivalent.
\begin{enumerate2}
\item
The torsion class $\calT$ is functorially finite.
\item
The torsion-free class $\calF$ is functorially finite.
\item
There exists $M \in \mod A$ such that $\calT=\Fac M$.
\item
There exists $N \in \mod A$ such that $\calF=\Sub N$.
\end{enumerate2}
In this case, $\calT$ is bicompact.
\end{Prop}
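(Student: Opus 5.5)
This is Smal\o's theorem, cited from \cite{Smalo}. I would prove the equivalences by a cycle, then deduce bicompactness from (c) and (d).

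\textbf{(a)$\Rightarrow$(c).} Take a left $\calT$-approximation $f\colon A \to M$ of the regular module $A$, with $M \in \calT$. Any $X \in \calT$ has a surjection $A^{\oplus s}\to X$, which factors through $f^{\oplus s}$. Hence $X \in \Fac M$, and conversely $\Fac M\subset\calT$ because $\calT$ is closed under factor modules. So $\calT=\Fac M$.

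\textbf{(c)$\Rightarrow$(a).} For $X \in \mod A$, choose a basis of the finite-dimensional space $\Hom_A(M,X)$. The induced map $M^{\oplus s}\to X$ has image $tX$, the torsion submodule of $X$. Indeed, a map $T\to X$ with $T \in \calT$ lands in $tX$, and $tX$ is covered by the image because $tX\in\Fac M$. So $M^{\oplus s}\to tX$ is a right approximation. Left approximations exist for every subcategory of the form $\Fac M$, since $\Fac M=\Fac(\add M)$ and $\add M$ is functorially finite. Concretely, given $X$, take a left $\add M$-approximation $X\to M'$; any map $X\to T$ with $T \in \Fac M$ factors through the torsion part of its pushout. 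The cleaner route is the standard fact (Auslander--Smal\o) that $\Fac M$ is covariantly finite. The dual arguments give (b)$\Leftrightarrow$(d).

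\textbf{(c)$\Rightarrow$(d).} This is the core step. Let $\tau$ be the Auslander--Reiten translation and put $N:=\tau M\oplus D A'$, where $A'$ is chosen suitably. The Auslander--Reiten formula gives that $\Hom(M,X)=0$ if and only if $\mathrm{Ext}^1$-type vanishing $\mathrm{Ext}^1(X,\tau M)=0$ holds, modulo projective maps. Using this, one identifies $\calF=M^\perp$ with ${}^{\perp_1}(\tau M)$ restricted to modules annihilated by the annihilator of $M$. Such a category is functorially finite and equals $\Sub$ of an Ext-injective generator. The dual statement gives (d)$\Rightarrow$(c).

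This step is the main obstacle, and here I would simply invoke \cite{Smalo} as the paper does.

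\textbf{Bicompactness.} Given (c) and (d), we have
\[
\sfT(M)=\Filt\Fac M=\Fac M=\calT,
\]
since $\calT$ is closed under extensions. Similarly $\sfF(N)=\Sub N=\calF=\calT^\perp$. Hence $\calT$ is compact and cocompact, that is, bicompact.
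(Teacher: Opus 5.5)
Your overall structure matches the paper's. The paper gives no argument for the equivalences beyond the citation of Smal{\o}, and its final sentence is exactly your observation $\sfT(M)=\Filt(\Fac M)=\Fac M=\calT$ and $\sfF(N)=\Sub N=\calF=\calT^\perp$. Your (a)$\Rightarrow$(c) is correct, and so is the right-approximation argument (which holds for every torsion class, since $tX\hookrightarrow X$ is always a right $\calT$-approximation).

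However, the two places where you sketch actual content would fail as written. You should either delete them and simply cite, or repair them.

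First, the ``concrete'' left approximation does not work, because a left $\add M$-approximation of $X$ itself does not detect maps from $X$ into $\Fac M$. Take $A=K(1\to 2)$, let $M$ be the indecomposable projective-injective module, and let $X$ be its simple top. Then $\Hom_A(X,M)=0$, so the approximation is $X\to 0$, yet $X\in\Fac M$. The pushout of $X\to 0$ with $\mathrm{id}_X$ is $0$, so nothing built this way recovers $\mathrm{id}_X$. Moreover, an object built from a pushout with $f$ depends on $f$, so it would not be an approximation in any case. The fix is to approximate the projective cover $p\colon P\to X$ instead. If $h\colon P\to M'$ is a left $\add M$-approximation, then $X=P/\Ker p\to M'/h(\Ker p)$ is a left $\Fac M$-approximation. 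To see this, take $f\colon X\to T$ and a surjection $q\colon M''\to T$, lift $fp$ to $g\colon P\to M''$, write $g=uh$, and note that $qu$ kills $h(\Ker p)$.

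Second, your sketch of (c)$\Rightarrow$(d) cannot work with an arbitrary generator $M$ of $\calT$. Take $\calT=\mod A$ and $M=A\oplus X$ with $X$ non-projective. Then $\calF=0$ while $\tau M=\tau X\neq 0$, so $\Sub(\tau M\oplus DA')\neq\calF$. Your identification of $M^\perp$ with ${}^{\perp_1}(\tau M)$, cut down to modules annihilated by the annihilator of $M$, already fails for $M=A$: the left side is $0$ and the right side is $\mod A$. The stable-Hom discrepancy you mention is exactly what breaks it, since projective summands of $M$, and maps factoring through projectives, are invisible to $\tau$.

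The missing idea is to first replace $M$ by the Ext-projective generator of $\calT$. This is a support $\tau$-tilting pair $(T,P)$ with $\calT=\Fac T$, and its existence is precisely where functorial finiteness is used (cf.\ Proposition~\ref{Prop_AIR}). Then $\calF=T^\perp=\Sub(\tau T\oplus\nu P)$, which is the equality $\calF_U=\Sub H^{-1}(\nu U)$ recorded in Definition~\ref{Def_ovT_U}. What places $\tau T\oplus\nu P$ inside $T^\perp$ is the $\tau$-rigidity $\Hom_A(T,\tau T)=0$ together with $\Hom_A(T,\nu P)\cong D\Hom_A(P,T)=0$.
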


We say that a torsion pair $(\calT,\calF)$ is \emph{functorially finite}
if both $\calT$ and $\calF$ are functorially finite.
It is equivalent to that $\calT$ or $\calF$ is functorially finite
by Proposition \ref{Prop_Smalo}.

As a notion related to functorially finite torsion classes,
we explain 2-term presiltng complexes introduced in \cite{KV}
in the perfect derived category $\sfK^\rmb(\proj A)$.

Let $U$ be a complex in $\sfK^\rmb(\proj A)$.
Then $U$ is said to be \emph{presilting} 
if $\Hom_{\sfK^\rmb(\proj A)}(U,U[>0])=0$.
A presilting complex $U$ is said to be \emph{silting} 
if $\sfK^\rmb(\proj A)$ itself
is the smallest thick subcategory of $\sfK^\rmb(\proj A)$ containing $U$.
Moreover $U$ is said to be \emph{2-term}
if $U$ is isomorphic to some complex of the form $U^{-1} \to U^0$
whose terms except the $-1$st and the $0$th ones vanish.

We write $\twopsilt A$ (resp.~$\twosilt A$)
for the set of isoclasses of basic 2-term presilting (resp.~silting)
complexes in $\sfK^\rmb(\proj A)$.
Then we have the following torsion pairs for each $U \in \twopsilt A$, 
which are functorially finite by Proposition \ref{Prop_Smalo}.

\begin{Def}\label{Def_ovT_U}
For each $U \in \twopsilt A$, we define two functorially finite torsion pairs
\begin{align*}
(\ovcalT_U,\calF_U)=({^\perp H^{-1}(\nu U)},\Sub H^{-1}(\nu U)), \quad
(\calT_U,\ovcalF_U)=(\Fac H^0(U),{H^0(U)^\perp}).
\end{align*}
\end{Def}

We obtain all functorially finite torsion classes in this way.

\begin{Prop}\label{Prop_AIR}\cite[Theorems 2.7, 3.2]{AIR}
There exist two surjections
$\twopsilt A \to \ftors A$ given by 
$U \mapsto \ovcalT_U$ and $U \mapsto \calT_U$.
Moreover we have a bijection $\twosilt A \to \ftors A$
given by $U \mapsto \ovcalT_U=\calT_U$.
\end{Prop}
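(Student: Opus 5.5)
The plan is to pass through the correspondence between 2-term presilting complexes and support $\tau$-rigid pairs, and to work throughout with one homological formula. For $U=(U^{-1}\xrightarrow{d}U^0)\in\twopsilt A$ and $X\in\mod A$, the Nakayama functor gives a functorial isomorphism
\begin{align*}
\Hom_{\sfK^\rmb(\proj A)}(U,X[1])\cong D\Hom_A(X,H^{-1}(\nu U)),
\end{align*}
where $X$ is viewed as a stalk complex in the derived category. Consequently $\ovcalT_U=\{X\mid \Hom(U,X[1])=0\}$, while $\calT_U=\Fac H^0(U)$ by definition. Applying $\Hom(U,-)$ to the triangle $U^{-1}\to U^0\to U\to U^{-1}[1]$ shows that $\Hom(U,U[1])=0$ forces $H^0(U)\in\ovcalT_U$. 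Since $\ovcalT_U$ is a torsion class, this gives $\calT_U\subset\ovcalT_U$. Both classes are functorially finite by Proposition \ref{Prop_Smalo}, since they are of the form $\Fac M$ and ${^\perp N}$ with $\Sub N$ the corresponding torsion-free class. So both maps land in $\ftors A$.

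Next, for $U\in\twosilt A$ I would prove $\ovcalT_U\subset\calT_U$. Silting means $A$ lies in the thick closure of $U$; for 2-term complexes this can be sharpened to a triangle $A\to U'\to U''\to A[1]$ with $U',U''\in\add U$. Take $X\in\ovcalT_U$ and apply $\Hom(-,X)$. Since $\Hom(U'',X[1])=0$, the map $\Hom(U',X)\to\Hom(A,X)=X$ is surjective. Moreover $\Hom(U',X)=\Hom_A(H^0(U'),X)$ because $U'$ is 2-term and $X$ is a module. Hence $X$ is a quotient of a direct sum of copies of $H^0(U')$, so $X\in\Fac H^0(U)$, and therefore $\ovcalT_U=\calT_U$ for silting $U$.

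Surjectivity then follows from the silting case, since $\twosilt A\subset\twopsilt A$. Given $\calT\in\ftors A$, let $M$ be a basic additive generator of the Ext-projective objects $P(\calT)$. By Auslander--Smal{\o}, $\calT=\Fac M$ and $M$ is $\tau$-rigid. Let $P$ be a basic projective module with $\add P=\{Q\in\proj A\mid \Hom_A(Q,\calT)=0\}$. Then $U:=P_M\oplus P[1]$ is a 2-term presilting complex, where $P_M$ is a minimal projective presentation of $M$. Presilting follows from $\Hom(P_M,X[1])\cong D\Hom(X,\tau M)$ and $\Hom_A(P,M)=0$.

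The main obstacle is showing that $U$ is actually silting, i.e.\ that $|M|+|P|=|A|$. I would handle it via Bongartz completion, which completes any $U\in\twopsilt A$ to a silting complex using a universal extension $A\to E\to U^{\oplus m}[1]$ as above. The completed silting complex $U\oplus E'$ has $\ovcalT_{U\oplus E'}=\ovcalT_U$. Applying this to any presilting $V$ with $\calT=\ovcalT_V$, which exists by the definition of $P(\calT)$ as a maximal object, yields a silting complex $S$ with $\ovcalT_S=\calT_S=\calT$.

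For injectivity on $\twosilt A$, I would show that $S$ is recovered from $\calT=\calT_S$. The indecomposable summands of $H^0(S)$ are exactly the indecomposable Ext-projectives of $\calT$. The summands of $S$ of the form $Q[1]$ are determined by $\Hom_A(Q,\calT)=0$. A 2-term presilting complex is in turn determined by this pair (its degree-zero part plus its shifted projective part), because 2-term presilting complexes have no summands of the form $H^0=0$ other than shifted projectives. This identification of the summands, together with the completion step, is where I expect most of the work to lie.
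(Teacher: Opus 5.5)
Your overall strategy is a legitimate alternative to the support $\tau$-tilting route of \cite{AIR} that the paper cites. It rests on the formula $\Hom(U,X[1])\cong D\Hom_A(X,H^{-1}(\nu U))$, the silting triangle giving $\ovcalT_S=\calT_S$, and completions. However, there is a genuine gap in the surjectivity step. The presilting $V$ with $\ovcalT_V=\calT$, which you say ``exists by the definition of $P(\calT)$ as a maximal object'', is never constructed. Your only candidate is $U=P_M\oplus P[1]$. For it, $H^{-1}(\nu U)=\tau M\oplus\nu P$, so $\ovcalT_U={}^\perp(\tau M)\cap P^\perp$. The equality ${}^\perp(\tau M)\cap P^\perp=\Fac M$ says exactly that $(M,P)$ is support $\tau$-tilting, i.e.\ that $U$ is already silting. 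That is the obstacle you were trying to bypass, and it is the nontrivial content of \cite[Theorem 2.7]{AIR}. Bongartz completion of $U$ only gives a silting $S$ with $\calT_S=\ovcalT_U\supseteq\calT$, which may be strictly larger. So surjectivity of $U\mapsto\ovcalT_U$ and of $\twosilt A\to\ftors A$ is not proved; only surjectivity of $U\mapsto\calT_U$ is.

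\medskip

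The simplest repair stays inside your framework but uses the completion that preserves $\calT_U$ rather than $\ovcalT_U$. Take a left $\add U$-approximation $f\colon A\to U'$ and a triangle $A\xrightarrow{f}U'\to E^-\to A[1]$.
\begin{itemize}
\item $E^-$ is 2-term and $U\oplus E^-$ is presilting. The key vanishing $\Hom(E^-,E^-[1])=0$ holds because every map $A\to E^-$ lifts to $A\to U'$ (as $\Hom(A,A[1])=0$), hence factors through $f$.
\item $A$ lies in the thick closure, so $S:=U\oplus E^-$ is silting.
\item $H^0(E^-)=\mathrm{Coker}\,H^0(f)\in\Fac H^0(U)$, so $\calT_S=\calT_U=\calT$, and your silting step gives $\ovcalT_S=\calT$.
\end{itemize}
Alternatively, build $V$ dually from the Ext-injective generator $N$ of $\calT^\perp=\Sub N$. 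If $0\to N\to I^0\to I^1$ is a minimal injective copresentation, $V:=\nu^{-1}(I^0\to I^1)$ is presilting because $N$ is $\tau^{-1}$-rigid, and $\ovcalT_V={}^\perp N=\calT$; then Bongartz-complete.

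Both your ``sharpened'' triangle $A\to U'\to U''\to A[1]$ and your injectivity step rely on maximality of silting objects: $S\oplus X$ presilting forces $X\in\add S$. You should state this. With it, injectivity needs no summand-by-summand identification. If $\calT_{S_1}=\calT_{S_2}$, then since both are 2-term, $\Hom(S_i,S_j[1])\cong D\Hom_A(H^0(S_j),H^{-1}(\nu S_i))$. This vanishes because $H^0(S_j)\in\calT_{S_i}=\ovcalT_{S_i}$. Hence $S_1\oplus S_2$ is presilting and $\add S_1=\add S_2$.

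A minor point: the inclusion $H^0(U)\in\ovcalT_U$ does not follow from the triangle you name. Use $H^{-1}(U)[1]\to U\to H^0(U)\to H^{-1}(U)[2]$, which gives $\Hom(U,U[1])\cong\Hom(U,H^0(U)[1])$.
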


To consider more torsion classes, 
we use the Grothendieck groups $K_0(\mod A)$ and $K_0(\proj A)$,
where $\proj A$ is seen as an exact category.

It is well-known that $K_0(\mod A)$ is naturally identified with
the Grothendieck group $K_0(\sfD^\rmb(\mod A))$
of the bounded derived category $\sfD^\rmb(\mod A)$
as a triangulated category.
Similarly, $K_0(\proj A)$ is identified with $K_0(\sfK^\rmb(\proj A))$.
These Grothendieck groups give rise to the real Grothendieck groups
$K_0(\proj A)_\R:=K_0(\proj A) \otimes_\Z \R$ and 
$K_0(\mod A)_\R:=K_0(\mod A) \otimes_\Z \R$.

They have the \emph{Euler bilinear form}
$\langle !,? \rangle \colon K_0(\proj A)_\R \times K_0(\mod A)_\R \to \R$
which satisfies
\begin{align*}
\langle [U], [X] \rangle
=\sum_{l \in \Z}\dim_K \Hom_{\sfD^\rmb(\mod A)}(U,X[l])
\end{align*}
for any $U \in \sfK^\rmb(\proj A)$ and $X \in \sfD^\rmb(\mod A)$.

The isoclasses $[S(1)],\ldots,[S(n)]$ 
of simple $A$-modules is a basis of the Grothendieck group $K_0(\mod A)$;
that is, $K_0(\mod A)=\bigoplus_{i=1}^n \Z[S(i)]$.
Similarly, the isoclasses $[P(1)],\ldots,[P(n)]$ 
of indecomposable projective $A$-modules 
is a basis of the Grothendieck group $K_0(\proj A)$;
that is, $K_0(\proj A)=\bigoplus_{i=1}^n \Z[P(i)]$.
These bases are dual up to rescaling
with respect to the Euler bilinear form above;
that is, 
\begin{align*}
\langle [P(i)],[S(j)] \rangle=\delta_{i,j} \End_K (S(j)).
\end{align*}

Via the Euler bilinear form, each element $\theta \in K_0(\proj A)_\R$
is identified with the $\R$-linear form
$\theta \colon K_0(\mod A)_\R \to \R$ given by 
$v \mapsto \langle \theta,v \rangle$.
In particular, for $M \in \mod A$, we have a real number
$\theta(M):=\theta([M]) \in \R$.
Then we can consider the following class of torsion pairs.

\begin{Def}\cite[Subsection 3.1]{BKT}
Let $\theta \in K_0(\proj A)_\R$.
Then we define two \emph{semistable torsion pairs}
$(\ovcalT_\theta,\calF_\theta)$ and $(\calT_\theta,\ovcalF_\theta)$ by
\begin{align*}
\ovcalT_\theta&:=\{M \in \mod A \mid
\text{$\theta(N) \ge 0$ for any factor module $N$ of $M$}\}, \\
\calF_\theta&:=\{M \in \mod A \mid
\text{$\theta(L)<0$ for any nonzero submodule $L \ne 0$ of $M$}\}, \\
\calT_\theta&:=\{M \in \mod A \mid
\text{$\theta(N)>0$ for any nonzero factor module $N \ne 0$ of $M$}\}, \\
\ovcalF_\theta&:=\{M \in \mod A \mid
\text{$\theta(L) \le 0$ for any submodule $L$ of $M$}\}.
\end{align*}
\end{Def}

Note that $\calT_\theta \subset \ovcalT_\theta$ always holds.

We have the following equivalence relation on $K_0(\proj A)_\R$.

\begin{Def}\cite[Definition 2.13]{A-wc}
We define an equivalence relation on $K_0(\proj A)_\R$
called the \emph{TF equivalence} as follows.
Two elements $\theta,\eta \in K_0(\proj A)_\R$ are said to be 
\emph{TF equivalent} if 
$\ovcalT_\theta=\ovcalT_\eta$ and $\calT_\theta=\calT_\eta$.
\end{Def}

Typical examples of TF equivalence classes are 
given by 2-term presilting complexes. 
For each $U=\bigoplus_{i=1}^m U_i \in \twopsilt A$ 
with each $U_i$ indecomposable,
we set 
\begin{align*}
C^\circ(U):=\sum_{i=1}^m \R_{>0}[U_i], \quad
C(U):=\sum_{i=1}^m \R_{\ge 0}[U_i],
\end{align*}
which are $m$-dimensional polyhedral cones 
\cite[Theorem 2.27, Corollary 2.28]{AiI}.

\begin{Prop}\label{Prop_Yurikusa}
Let $U \in \twopsilt A$.
\begin{enumerate}
\item
\cite[Proposition 3.3]{Yurikusa} (cf.~\cite[Proposition 3.27]{BST})
Let $U \in \twopsilt A$ and $\theta \in C^\circ(U)$.
Then we have functorially finite torsion pairs
$(\ovcalT_\theta,\calF_\theta)=(\ovcalT_U,\calF_U)$ and 
$(\calT_\theta,\ovcalF_\theta)=(\calT_U,\ovcalF_U)$.
\item
\cite[Proposition 3.11]{A-wc}
The cone $C^\circ(U)$ is a TF equivalence class. 
\end{enumerate}
\end{Prop}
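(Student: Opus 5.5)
For part (1), I would fix $\theta=\sum_{i=1}^m a_i[U_i]$ with all $a_i>0$ and evaluate $\theta$ on modules via the Euler form. Since each $U_i$ is 2-term, for $X\in\mod A$ only $l=0,1$ contribute to $\langle[U_i],[X]\rangle$. We have $\Hom_{\sfK^\rmb(\proj A)}(U_i,X)\cong\Hom_A(H^0(U_i),X)$, and the Auslander--Reiten type duality gives $\Hom(U_i,X[1])\cong D\Hom_A(X,H^{-1}(\nu U_i))$. Hence
\begin{align*}
\theta(X)=\sum_{i=1}^m a_i\bigl(\dim_K\Hom_A(H^0(U_i),X)-\dim_K\Hom_A(X,H^{-1}(\nu U_i))\bigr).
\end{align*}
I will also use two consequences of the presilting condition $\Hom(U,U[1])=0$. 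First, $\Fac H^0(U)\subset{^\perp H^{-1}(\nu U)}$, that is, $\calT_U\subset\ovcalT_U$. Second, and dually, $\Sub H^{-1}(\nu U)\subset H^0(U)^\perp$.

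With this formula the four inclusions are direct.

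\begin{itemize}
\item $\calT_U\subset\calT_\theta$: a nonzero factor $N$ of $M\in\Fac H^0(U)$ lies in $\Fac H^0(U)$. So the first term of $\theta(N)$ is positive and the second vanishes, giving $\theta(N)>0$.
\item $\calT_\theta\subset\calT_U$: take $M\in\calT_\theta$ and let $N$ be its torsion-free quotient for $(\calT_U,\ovcalF_U)$. Then $\Hom(H^0(U),N)=0$, so $\theta(N)\le 0$, which forces $N=0$.
\item $\ovcalT_U\subset\ovcalT_\theta$: every factor $N$ of $M\in{^\perp H^{-1}(\nu U)}$ again satisfies $\Hom(N,H^{-1}(\nu U))=0$. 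Hence $\theta(N)\ge0$.
\item $\ovcalT_\theta\subset\ovcalT_U$: suppose $M\in\ovcalT_\theta$ and $0\ne f\colon M\to H^{-1}(\nu U)$. The image $L$ is a factor of $M$ and lies in $\Sub H^{-1}(\nu U)\subset H^0(U)^\perp$. So $\theta(L)<0$, a contradiction.
\end{itemize}

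The torsion-free parts follow by taking perpendiculars. Functorial finiteness is Proposition \ref{Prop_Smalo}.

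For part (2), part (1) already shows that all of $C^\circ(U)$ lies in one TF equivalence class. The real content is the converse: if $\eta$ is TF equivalent to some $\theta\in C^\circ(U)$, then $\eta\in C^\circ(U)$. I would complete $U$ to a 2-term silting complex $T=U\oplus V$, for instance by Bongartz completion. By \cite{AiI} the classes $[T_1],\dots,[T_n]$ form a basis of $K_0(\proj A)_\R$, so I can write $\eta=\sum_j b_j[T_j]$. The aim is then to show $b_j>0$ for $T_j$ a summand of $U$ and $b_j=0$ for $T_j$ a summand of $V$. To extract the coefficients, I would use the semibrick (brick labels) dual to $T$: bricks $S_1,\dots,S_n$ with $\langle[T_i],[S_j]\rangle=0$ for $i\ne j$ and $\langle[T_j],[S_j]\rangle\ne0$. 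The sign of this pairing is determined by whether $S_j$ lies in $\calT_T$ or in $\ovcalF_T$, so the sign of $b_j$ is read off from $\eta(S_j)$.

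The point is then to locate each $S_j$ relative to the common torsion pairs of $\theta$ and $\eta$. Bricks attached to summands of $U$ lie in $\calT_U=\calT_\eta$ or in $\calF_U=\calF_\eta$, which forces the correct strict sign. Bricks attached to summands of $V$ lie in $\calW_\theta=\ovcalT_U\cap\ovcalF_U=\calW_\eta$, which forces $\eta(S_j)=0$.

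I expect the main obstacle to be exactly this step: producing the dual bricks with the right sign and membership, i.e.\ matching the bricks in $\calW_\theta$ with the summands of $V$ in the Bongartz completion. I would try first to handle it by silting reduction (Jasso): $\calW_\theta$ is equivalent to a module category whose $g$-vectors are the images of the $[V_j]$.
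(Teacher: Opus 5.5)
The paper does not prove this proposition: (1) is quoted from \cite{Yurikusa} (cf.~\cite{BST}) and (2) from \cite{A-wc}.

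\textbf{Part (1).} Your argument is correct and is the standard one from those sources. It uses $\Hom(U_i,X)\cong\Hom_A(H^0(U_i),X)$, $\Hom(U_i,X[1])\cong D\Hom_A(X,H^{-1}(\nu U_i))$, and the alternating sum $\sum_l(-1)^l\dim_K\Hom(U,X[l])$. The sign is missing in the paper's display of the Euler form, and you are right to use it. The four inclusions go through as you wrote them. Strictness comes from $\Hom_A(H^0(U),N)\neq 0$ for $0\neq N\in\Fac H^0(U)$, and from $\Hom_A(L,H^{-1}(\nu U))\neq 0$ for $0\neq L\subset H^{-1}(\nu U)$. The easy half of (2) is also fine.

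\textbf{Part (2), the converse half.} Your setup is sound: the Bongartz completion $T=U\oplus V$ (so $\calT_T=\ovcalT_U$ and $\calF_T=\calF_U$), the two-term simple-minded collection $\{S_j\}$ dual to $T$, and $b_j=\eta(S_j)/\langle[T_j],[S_j]\rangle$. However, the decisive sign claim is asserted rather than proved, and you have placed the difficulty on the wrong side.
\begin{enumerate}
\item The $V$-side needs no reduction. If $S_j$ is dual to a summand of $V$, then $\Hom(U,S_j[l])=0$ for all $l$. This says exactly that the underlying module of $S_j$ (whether $S_j=X$ or $S_j=Y[1]$) lies in $H^0(U)^\perp\cap{}^\perp H^{-1}(\nu U)=\calW_U=\calW_\eta$. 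Hence $\eta(S_j)=0$ and $b_j=0$.
\item A $U_i$-dual of the form $Y[1]$ is also immediate: $Y\in H^0(T)^\perp=\calF_U=\calF_\eta$, so $b_i>0$.
\item The real gap is a $U_i$-dual which is a module $X$. Duality only gives $X\in\calT_T=\ovcalT_U=\ovcalT_\eta$, hence $b_i\ge 0$. Strictness needs $X\in\calT_U$. This does not follow from knowing which bricks lie in $\calW_U$, because a priori $X$ could have a nonzero $(\calT_U,\ovcalF_U)$-torsion-free quotient $X/tX\in\ovcalT_U\cap\ovcalF_U=\calW_U$.
\end{enumerate}

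\textbf{How to close the gap.} Use that the $S_j$ are the simple objects of the heart $\mathcal{H}_T=\{Z\mid \Hom(T,Z[l])=0 \text{ for } l\neq 0\}$. Short exact sequences in $\mathcal{H}_T$ are sent by $\Hom(T,-)$ to exact sequences, directly from the definition of the heart.
\begin{enumerate}
\item Every $W\in\calW_U$ lies in $\calT_T\subset\mathcal{H}_T$.
\item So a nonzero map $X\to W$ would be a monomorphism in $\mathcal{H}_T$, since $X$ is simple there.
\item It would then give $0\neq\Hom(U_i,X)\hookrightarrow\Hom(U_i,W)=\Hom_A(H^0(U_i),W)=0$, which is impossible.
\end{enumerate}
Hence $\Hom_A(X,\calW_U)=0$. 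So $X/tX=0$, giving $X\in\calT_U=\calT_\eta$ and $\eta(X)>0$. Jasso's reduction \cite{Jasso} would give the same conclusion by identifying the $V$-duals with the simple objects of $\calW_U$; the heart argument is more direct. With this step inserted, your argument proves (2).
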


This and Proposition \ref{Prop_AIR} imply that
every functorially finite class $\calT$ has $\theta \in K_0(\proj A)_\R$
such that $\calT=\calT_\theta=\ovcalT_\theta$.
More explicitly, take the unique $T \in \twosilt A$ with 
$\calT=\calT_T=\ovcalT_T$, and then any $\theta \in C^\circ(T)$
satisfies $\calT=\calT_\theta=\ovcalT_\theta$.

We say that $\theta \in K_0(\proj A)_\R$ is \emph{rigid}
if there exists $U \in \twopsilt A$ such that $\theta \in C^\circ(U)$.
Proposition \ref{Prop_Yurikusa} (1) immediately gives the following.

\begin{Prop}\label{Prop_rigid_func_fin}
Let $\theta \in K_0(\proj A)_\R$ be rigid.
Then $\ovcalT_\theta$ and $\calT_\theta$ are functorially finite.
\end{Prop}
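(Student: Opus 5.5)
The statement follows directly from Proposition \ref{Prop_Yurikusa} (1), with no additional input. The plan is to unfold the definition of rigidity and read off functorial finiteness from the identification of the semistable torsion pairs with the presilting ones.

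First, since $\theta$ is rigid, by definition there is some $U \in \twopsilt A$ with $\theta \in C^\circ(U)$. Applying Proposition \ref{Prop_Yurikusa} (1) to this $U$ gives equalities of torsion pairs
\[
(\ovcalT_\theta,\calF_\theta)=(\ovcalT_U,\calF_U), \qquad (\calT_\theta,\ovcalF_\theta)=(\calT_U,\ovcalF_U).
\]
Next, by Definition \ref{Def_ovT_U} we have $\ovcalT_U={^\perp H^{-1}(\nu U)}$ with torsion-free part $\calF_U=\Sub H^{-1}(\nu U)$, and $\calT_U=\Fac H^0(U)$.

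Proposition \ref{Prop_Smalo} now finishes both cases:
\begin{itemize}
\item $\calT_\theta=\Fac H^0(U)$ is functorially finite by condition (c) of Proposition \ref{Prop_Smalo}.
\item $\ovcalT_\theta$ is the torsion part of a torsion pair whose torsion-free part is $\Sub H^{-1}(\nu U)$, so it is functorially finite by condition (d) of Proposition \ref{Prop_Smalo}.
\end{itemize}

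There is no genuine obstacle here. The only point needing care is that Proposition \ref{Prop_Yurikusa} (1) identifies whole torsion pairs, not just torsion classes. This matters for $\ovcalT_\theta$: its functorial finiteness is obtained through Smal{\o} symmetry from its torsion-free partner $\Sub H^{-1}(\nu U)$, rather than from a presentation of the form $\Fac M$.
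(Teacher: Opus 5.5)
Your proposal is correct and follows the paper's argument. The paper derives the statement immediately from Proposition \ref{Prop_Yurikusa} (1): the torsion pairs $(\ovcalT_U,\calF_U)$ and $(\calT_U,\ovcalF_U)$ are already functorially finite by Definition \ref{Def_ovT_U} and Proposition \ref{Prop_Smalo}. Your proof simply writes out that Smal{\o} step explicitly.
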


\section{Bicompactness of semistable torsion classes}\label{Sec_semistable}

In this section, we first show that Conjecture \ref{Conj_bicompact}
for semistable torsion classes.

\begin{Thm}\label{Thm_semistable_bicompact}
Let $\theta \in K_0(\proj A)_\R$.
Then the following conditions are equivalent.
\begin{enumerate2}
\item
The element $\theta$ is rigid.
\item
The torsion class $\calT_\theta$ is bicompact.
\item
The torsion class $\calT_\theta$ is compact.
\item
The torsion class $\calT_\theta$ is functorially finite.
\item
The torsion class $\ovcalT_\theta$ is bicompact.
\item
The torsion class $\ovcalT_\theta$ is cocompact.
\item
The torsion class $\ovcalT_\theta$ is functorially finite.
\end{enumerate2}
\end{Thm}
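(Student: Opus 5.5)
The implications among (a), (b), (c), (d) and among (a), (e), (f), (g) that go "downwards" are formal, so the plan is to close the two cycles
\[
\text{(a)}\Rightarrow\text{(d)}\Rightarrow\text{(b)}\Rightarrow\text{(c)}\Rightarrow\text{(a)},\qquad
\text{(a)}\Rightarrow\text{(g)}\Rightarrow\text{(e)}\Rightarrow\text{(f)}\Rightarrow\text{(a)}.
\]
Here (a)$\Rightarrow$(d) and (a)$\Rightarrow$(g) are Proposition \ref{Prop_rigid_func_fin}. The implications (d)$\Rightarrow$(b) and (g)$\Rightarrow$(e) are the last sentence of Proposition \ref{Prop_Smalo}, and (b)$\Rightarrow$(c), (e)$\Rightarrow$(f) are trivial. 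So all the content lies in (c)$\Rightarrow$(a) and (f)$\Rightarrow$(a). These two are dual: the first uses $\calT_\theta=\sfT(M)$ and factor modules, the second uses $\calF_\theta=(\ovcalT_\theta)^\perp=\sfF(N)$ and submodules. I will only describe the first.

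For (c)$\Rightarrow$(a), assume $\calT_\theta=\sfT(M)$.

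\emph{Openness step.} The classes $[N]\in K_0(\mod A)$ of factor modules $N$ of $M$ form a finite set, since they are dimension vectors bounded by that of $M$. Hence the condition ``$\eta(N)>0$ for every nonzero factor module $N$ of $M$'' is a finite system of strict linear inequalities in $\eta$. It therefore defines an open neighbourhood $V$ of $\theta$ in $K_0(\proj A)_\R$. For every $\eta\in V$ we get $M\in\calT_\eta$, and so $\calT_\theta=\sfT(M)\subset\calT_\eta$.

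\emph{Upper bound.} Next I would pair this with the standard semicontinuity from \cite{A-wc}: after shrinking $V$, every $\eta\in V$ satisfies $\ovcalT_\eta\subset\ovcalT_\theta$. The reason is that a module violating $\ovcalT_\theta$ has a factor $N$ with $\theta(N)<0$, and then $\eta(N)<0$ for $\eta$ near $\theta$. Getting this uniformly over all modules requires the finiteness argument of \cite{A-wc}. Combining the two steps gives the sandwich
\[
\calT_\theta\subset\calT_\eta\subset\ovcalT_\eta\subset\ovcalT_\theta\qquad\text{for all }\eta\in V.
\]

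\emph{Conclusion.} The sandwich says the torsion classes $\calT_\eta$ for $\eta\in V$ all lie in the interval $[\calT_\theta,\ovcalT_\theta]$, with $\calT_\theta$ as a common lower bound. I would then argue that only finitely many TF equivalence classes meet $V$, and that their union, an open neighbourhood of $\theta$, consists of chambers $C^\circ(T)$ together with their faces. The idea is that bricks in $\calW_\theta=\ovcalT_\theta\cap\calT_\theta^\perp$ cut the walls through $\theta$, and the lower bound $\calT_\theta\subset\calT_\eta$ prevents the accumulating walls that occur at non-rigid points. From this, $\theta$ lies in the support of the $g$-fan, i.e.\ $\theta\in C^\circ(U)$ for some $U\in\twopsilt A$.

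\emph{Main obstacle.} This last step is where I expect the difficulty. To make it precise, I would first try to show that $V$ contains an open set of rigid points whose cones have $\theta$ in their closure. Then I would invoke the characterization of rigid elements in \cite{A-wc}: $\theta$ is rigid if and only if a neighbourhood of $\theta$ is covered by finitely many closed silting cones $C(T)$.
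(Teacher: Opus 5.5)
\textbf{There is a genuine gap in (c)$\Rightarrow$(a).} Your reduction to (c)$\Rightarrow$(a) and its dual, the formal implications, and your openness step all match the paper. The problem is the ``upper bound'' $\ovcalT_\eta\subset\ovcalT_\theta$ for all $\eta$ near $\theta$. It is not a standard fact from \cite{A-wc}, and in general it is false.

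For a counterexample, take the Kronecker algebra, labelled so that the indecomposable preprojectives have dimension vectors $(n,n+1)$. Let $\theta=[P(1)]-[P(2)]$ and $\eta_t=(1+t)[P(1)]-[P(2)]$ with $t>0$. Let $P$ be the indecomposable preprojective of dimension vector $(n,n+1)$ with $n>1/t$.
\begin{itemize}
\item Every proper nonzero factor module of $P$ is a sum of regular and preinjective modules, and $\eta_t$ is positive on these. Also $\eta_t(P)=nt-1>0$, so $P\in\calT_{\eta_t}$.
\item On the other hand $\theta(P)=-1$, so $P\notin\ovcalT_\theta$.
\end{itemize}
Hence $\ovcalT_{\eta_t}\not\subset\ovcalT_\theta$ for every $t>0$, and no finiteness argument makes your pointwise estimate uniform. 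A bound of this kind is exactly what the dual of your openness step would give from $\calF_\theta=\sfF(N)$, that is, from (f). That condition is not available when you are proving (c)$\Rightarrow$(a).

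Even if you had the sandwich, the interval $[\calT_\theta,\ovcalT_\theta]$ can contain many TF classes. Your ``Conclusion'' (finitely many classes meeting $V$, walls cut by bricks of $\calW_\theta$, covering by cones) describes the desired outcome rather than proving it.

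\textbf{The missing idea is to perturb only to one side.} Set $E_2=\{\eta \mid \eta<_{\cw}\theta\}$.
\begin{itemize}
\item For $\eta\in E_2$ we have $\eta(X)<\theta(X)$ for every nonzero $X$. This gives, uniformly and for free, the much stronger bound $\ovcalT_\eta\subset\calT_\theta$, not merely $\subset\ovcalT_\theta$.
\item On the open set $E=V\cap E_2$, your lower bound combines with this to give $\calT_\theta\subset\calT_\eta\subset\ovcalT_\eta\subset\calT_\theta$. The interval collapses: every $\eta\in E$ satisfies $\calT_\eta=\ovcalT_\eta=\calT_\theta$, so $E$ lies in a single TF class.
\item TF classes with nonempty interior are exactly the chambers (Proposition \ref{Prop_chamber}), so $E\subset C^\circ(T)$ for some $T\in\twosilt A$.
\item Since $V$ is an open neighbourhood of $\theta$ and $\theta\in\overline{E_2}$, we get $\theta\in\overline{E}\subset C(T)$.
\item Therefore $\theta\in C^\circ(U)$ for a direct summand $U$ of $T$, so $\theta$ is rigid.
\end{itemize}
No finiteness of walls and no local covering statement is needed. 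For (f)$\Rightarrow$(a), dualize: use $\theta<_{\cw}\eta$ and $\calF_\theta=\sfF(N)$, which give $\calF_\theta\subset\calF_\eta\subset\ovcalF_\eta\subset\calF_\theta$.
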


We mainly focus on showing (c)$\Rightarrow$(a) 
and its dual (f)$\Rightarrow$(a),
since the remaining parts easily follow from the previous section.

To prove (c)$\Rightarrow$(a), 
the following result plays an important role.

\begin{Prop}\label{Prop_chamber}\cite[Theorem 3.17]{A-wc}
The following things coincide.
\begin{enumerate2}
\item
The TF equivalence classes with nonempty interiors.
\item
The cones $C^\circ(T)$ for all $T \in \twosilt A$.
\end{enumerate2}
\end{Prop}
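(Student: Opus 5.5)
The inclusion (b)$\subset$(a) is the easy direction, and I would do it first. For $T=\bigoplus_{i=1}^n T_i\in\twosilt A$, Proposition \ref{Prop_Yurikusa}(2) says that $C^\circ(T)$ is a TF equivalence class. Because $T$ is silting, it has $n=|A|$ indecomposable summands. By \cite{AiI} the classes $[T_1],\ldots,[T_n]$ are linearly independent, so they form a basis of $K_0(\proj A)_\R$. Hence $C^\circ(T)$ is an open simplicial cone, and in particular it has nonempty interior.

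For (a)$\subset$(b), let $E$ be a TF equivalence class with nonempty interior, and fix $\theta$ in that interior. The first step is to show $\calW_\theta=\ovcalT_\theta\cap\calT_\theta^\perp=0$. Suppose $0\ne M\in\calW_\theta$. Then $\theta(M)=0$, since $\theta(M)\ge0$ from $M\in\ovcalT_\theta$ and $\theta(M)\le0$ from $M\in\ovcalF_\theta$. Choose $\eta\in K_0(\proj A)_\R$ with $\eta(M)<0$; for instance $\eta=-\sum_i[P(i)]$ works, because $[M]\ne0$ has nonnegative coordinates. For small $\epsilon>0$ the element $\theta+\epsilon\eta$ still lies in $E$, but $(\theta+\epsilon\eta)(M)<0$. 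This forces $M\notin\ovcalT_{\theta+\epsilon\eta}=\ovcalT_\theta$, a contradiction. So $\calT_\theta=\ovcalT_\theta=:\calT$, and the same holds for every element of $E$. Since $E$ is open and invariant under positive scaling, we may also take $\theta$ to be a lattice point in $K_0(\proj A)$.

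The key step, which I expect to be the main obstacle, is to show that $\theta$ is rigid, that is, $\theta\in C^\circ(U)$ for some $U\in\twopsilt A$. I would use the presentation space $\Hom(\theta)$ and take a general presentation $f\colon P^{-1}\to P^0$ with $[P^0]-[P^{-1}]=\theta$. The aim is to show that $U=(P^{-1}\to P^0)$ is presilting. Suppose it is not. Then the general presentation has a non-rigid summand, and I would try to produce from it a nonzero module $X$ that lies in $\ovcalT_\theta$ and in $\calT_\theta^\perp$. The candidate is a suitable subquotient of the general cokernel with $\theta(X)=0$, in the style of King's semistable objects. Such an $X$ contradicts $\calW_\theta=0$. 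The tools I would try first are the generic-decomposition results for presentations (Derksen–Fei) and the description of $\ovcalT_\theta$ for lattice $\theta$ by a semibrick \cite{A-wc}.

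Once $U$ is presilting with $\theta\in C^\circ(U)$, the cone $C^\circ(U)$ is a TF equivalence class by Proposition \ref{Prop_Yurikusa}(2), so $C^\circ(U)=E$. Since $E$ has nonempty interior, $C^\circ(U)$ must be $n$-dimensional. This means $U$ has $n$ summands and is therefore silting. Hence $E=C^\circ(U)$ with $U\in\twosilt A$, which completes the proof.
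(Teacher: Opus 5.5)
Your easy inclusion is correct. So is your proof that an interior point $\theta$ of $E$ has $\calW_\theta=0$, which gives $\calT_\theta=\ovcalT_\theta$ and hence $\ovcalF_\theta=\calF_\theta$, since the two torsion pairs then share their torsion class. The reduction to a lattice point is fine too, though strictly it is the interior of $E$, not $E$, that is open and invariant under positive scaling. The final passage from a presilting $U$ with $C^\circ(U)=E$ to a silting complex is also correct.

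The gap is the step you flag yourself: you never show that such a $\theta$ is rigid, and that step is essentially the whole content of (a)$\subset$(b). Your plan there is: if a general $f\in\Hom(\theta)$ is not presilting, extract a nonzero module of $\calW_\theta$. That is just the contrapositive of the claim ``$\theta\in K_0(\proj A)$ and $\calW_\theta=0$ imply $\theta$ rigid'', and you give no mechanism for producing the semistable module. The tools you name do not supply one directly. For instance, the semibrick $\calS$ with $\ovcalT_\theta=\sfT(\calS)$ need not meet $\calW_\theta$: for $\theta\in C^\circ(T)$ with $\calT_T\neq0$ we have $\calW_\theta=0$ but $\calS\neq\emptyset$. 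The generic-decomposition route would need substantial further input on general presentations that the proposal does not provide.

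The missing idea is to use integrality of $\theta$ on a heart rather than on presentations.
Let $\mathcal{H}:=\ovcalF_\theta[1]*\calT_\theta\subset\sfD^\rmb(\mod A)$ be the heart of the intermediate t-structure attached to the torsion pair $(\calT_\theta,\ovcalF_\theta)$.

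For $0\neq X\in\mathcal{H}$ we have $H^0(X)\in\calT_\theta$ and $H^{-1}(X)\in\ovcalF_\theta=\calF_\theta$, not both zero. Hence $\theta(X)=\theta(H^0(X))-\theta(H^{-1}(X))$ is a positive integer.

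Since $\theta$ is additive on short exact sequences in $\mathcal{H}$, every strict chain of subobjects of $X$ has length at most $\theta(X)$. So $\mathcal{H}$ is a length category.

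By the Koenig--Yang correspondence, 2-term silting complexes correspond to intermediate t-structures with length heart, with $T$ going to the heart $\ovcalF_T[1]*\calT_T$. So there is $T\in\twosilt A$ with $(\calT_T,\ovcalF_T)=(\calT_\theta,\ovcalF_\theta)$.

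Then $\ovcalT_\theta=\calT_\theta=\calT_T=\ovcalT_T$, so $\theta$ is TF equivalent to the points of $C^\circ(T)$. Proposition \ref{Prop_Yurikusa}~(2) then gives $E=C^\circ(T)$.

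The present paper only quotes \cite[Theorem 3.17]{A-wc} for this statement. An argument along these lines, in keeping with the Koenig--Yang approach of that reference, closes your gap and makes general presentations unnecessary.
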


Thus we immediately have the next property.

\begin{Lem}\label{Lem_E_TF}
Let $E$ be an open subset of $K_0(\proj A)_\R$.
If $E$ is contained in a single TF equivalence class,
then there exists $T \in \twosilt A$ such that $E \subset C^\circ(T)$.
\end{Lem}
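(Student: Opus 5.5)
The statement is Lemma \ref{Lem_E_TF}, which we deduce directly from Proposition \ref{Prop_chamber}; the only work is to locate the TF equivalence class containing $E$.

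If $E$ is empty, there is nothing to prove in substance. In that case any $T \in \twosilt A$ will do, and such a $T$ exists, for instance $T=A$ regarded as a stalk complex in degree $0$.

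So assume $E \ne \emptyset$, and let $\calC$ be the single TF equivalence class with $E \subset \calC$. Since $E$ is a nonempty open subset of $K_0(\proj A)_\R$ contained in $\calC$, every point of $E$ is an interior point of $\calC$. Hence $\calC$ has nonempty interior.

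By Proposition \ref{Prop_chamber}, the TF equivalence classes with nonempty interior are exactly the cones $C^\circ(T)$ for $T \in \twosilt A$. Therefore $\calC=C^\circ(T)$ for some $T \in \twosilt A$, and consequently $E \subset C^\circ(T)$.

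There is no real obstacle. The one point to check is that "nonempty interior" in Proposition \ref{Prop_chamber} refers to the interior in the Euclidean topology of the finite-dimensional real vector space $K_0(\proj A)_\R$. This is the same topology in which $E$ is assumed open, so the argument applies.
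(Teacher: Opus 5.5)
Your proposal is correct and is the same argument as the paper's: the paper derives the lemma immediately from Proposition \ref{Prop_chamber}, noting that a nonempty open set inside one TF equivalence class gives that class nonempty interior. Your separate treatment of $E=\emptyset$ is a harmless extra the paper omits; strictly, you should take the basic part of $A$ as the stalk complex, since $\twosilt A$ consists of basic complexes.
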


We also need the elementary observations below.
For $\theta,\eta \in K_0(\proj A)_\R$,
we write $\eta<_{\cw}\theta$ if 
$\theta-\eta \in \sum_{i=1}^n \R_{>0}[P(i)] \subset K_0(\proj A)_\R$.

\begin{Lem}\label{Lem_open_convex}
The following statements hold.
\begin{enumerate}
\item
Let $M \in \mod A$.
Then $\{\eta \in K_0(\proj A)_\R \mid M \in \calT_\eta\}$
is open in $K_0(\proj A)_\R$.
\item
Let $\theta,\eta \in K_0(\proj A)_\R$.
If $\eta<_{\cw}\theta$, 
then we have $\calT_\eta \subset \ovcalT_\eta \subset \calT_\theta$.
\end{enumerate}
\end{Lem}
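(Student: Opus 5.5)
Both parts come down to finiteness and positivity properties of the classes $[N]\in K_0(\mod A)$ of factor modules $N$ of a fixed module. I will treat (1) and (2) separately and directly from the definitions of $\calT_\eta$ and $\ovcalT_\eta$.

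For (1), fix $M \in \mod A$ and consider the set $D_M:=\{[N] \mid N \text{ is a nonzero factor module of } M\} \subset K_0(\mod A)$.
\begin{itemize}
\item Each such $[N]$ is a dimension vector $\sum_j a_j [S(j)]$ with $0 \le a_j \le$ the multiplicity of $S(j)$ in $M$. So $D_M$ is a \emph{finite} set, even though $M$ may have infinitely many factor modules up to isomorphism.
\item By definition, $M \in \calT_\eta$ if and only if $\langle \eta, v\rangle>0$ for all $v \in D_M$.
\item Hence $\{\eta \mid M \in \calT_\eta\}=\bigcap_{v \in D_M}\{\eta \mid \langle \eta,v\rangle>0\}$.
\item Each set in this intersection is an open half-space in $K_0(\proj A)_\R$, since $\eta\mapsto\langle\eta,v\rangle$ is a linear form. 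A finite intersection of open sets is open, which proves (1).
\end{itemize}
The one point to state explicitly is that finiteness comes from the bounded dimension vectors, not from finitely many isoclasses of factor modules.

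For (2), the inclusion $\calT_\eta \subset \ovcalT_\eta$ was already noted after the definition, so only $\ovcalT_\eta \subset \calT_\theta$ remains.
\begin{itemize}
\item Write $\theta-\eta=\sum_{i=1}^n c_i[P(i)]$ with all $c_i>0$, which is what $\eta<_{\cw}\theta$ means.
\item Take $M \in \ovcalT_\eta$ and a nonzero factor module $N$ of $M$. Then $\theta(N)=\eta(N)+\sum_i c_i\langle [P(i)],[N]\rangle$.
\item We have $\eta(N)\ge 0$ because $M\in\ovcalT_\eta$.
\item Since $P(i)$ is projective, $\langle [P(i)],[N]\rangle=\dim_K\Hom_A(P(i),N)\ge 0$.
\item This is strictly positive for any $i$ with $S(i)$ a composition factor of $N$. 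Such an $i$ exists because $N\ne 0$.
\item Therefore $\theta(N)>0$, so $M\in\calT_\theta$.
\end{itemize}

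I do not expect a genuine obstacle here. The only care needed is in (2), in identifying $\langle[P(i)],[N]\rangle$ with $\dim_K\Hom_A(P(i),N)$, equivalently a positive multiple of the multiplicity of $S(i)$ in $N$. This follows from the Euler form formula, since the higher $\Hom$ terms vanish for a projective $P(i)$, together with the stated duality $\langle [P(i)],[S(j)]\rangle=\delta_{i,j}\dim_K\End_A(S(j))$.
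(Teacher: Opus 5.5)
Your proof is correct and takes essentially the same route as the paper. In (1) the paper also writes the set as a finite intersection of open half-spaces, indexed by the finite set of classes of nonzero factor modules of $M$. In (2) it rests on $\theta(N)>\eta(N)$ for every nonzero $N$, which your computation via $\dim_K\Hom_A(P(i),N)$ simply spells out.
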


\begin{proof}
(1)
This set is equal to $\{\theta \in K_0(\proj A)_\R \mid 
\text{$\theta(v)>0$ for any $v \in F_M$}\}$,
where $F_M:=\{[N] \mid \text{$N$ is a nonzero factor module of $M$}\}
\subset K_0(\mod A)$.
Since $F_M$ is a finite set,
we have the assertion.

(2)
The inclusion $\ovcalT_\eta \subset \calT_\theta$ follows from
$\theta(M)>\eta(M)$ for any nonzero $M \in \mod A \setminus \{0\}$.
The other inclusion $\calT_\eta \subset \ovcalT_\eta$ is obvious.
\end{proof}

Then we have the following crucial statement.

\begin{Prop}\label{Prop_T_compact_rigid}
Let $\theta \in K_0(\proj A)_\R$.
If $\calT_\theta$ is compact, then $\theta$ is rigid.
\end{Prop}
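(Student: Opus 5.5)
Assume $\calT_\theta=\sfT(M)$ for some $M \in \mod A$. The plan is to find an open set $E$ inside a single TF equivalence class, apply Lemma \ref{Lem_E_TF} to get $T \in \twosilt A$ with $E \subset C^\circ(T)$, and then deduce that $\theta$ itself lies in (the cone of) a face of $C(T)$. Since $M \in \calT_\theta$, Lemma \ref{Lem_open_convex} (1) gives an open neighbourhood $V$ of $\theta$ such that $M \in \calT_\eta$ for all $\eta \in V$. Because $\calT_\eta$ is a torsion class, we then have $\calT_\theta=\sfT(M) \subset \calT_\eta$ for all $\eta \in V$.

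Now take $E:=\{\eta \in V \mid \eta <_{\cw} \theta\}$. This is the intersection of $V$ with the open cone $\theta-\sum_i \R_{>0}[P(i)]$, hence open and nonempty. For $\eta \in E$, Lemma \ref{Lem_open_convex} (2) gives $\calT_\eta \subset \ovcalT_\eta \subset \calT_\theta$. Combined with the previous inclusion, this yields $\calT_\eta=\ovcalT_\eta=\calT_\theta$. Thus all $\eta \in E$ are TF equivalent, and Lemma \ref{Lem_E_TF} provides $T \in \twosilt A$ with $E \subset C^\circ(T)$ and $\calT_T=\calT_\theta$.

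It remains to pass from $E$ to $\theta$. Since $\theta$ lies in the closure of $E$, we get $\theta \in C(T)$. Hence $\theta \in C^\circ(U)$ for the direct summand $U$ of $T$ consisting of those $T_i$ with positive coefficient in $\theta$; here $U$ is presilting and we allow $U=0$, whose cone $C^\circ(0)=\{0\}$ makes $\theta=0$ rigid. So $\theta$ is rigid.

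The main obstacle is really the middle step. It needs the precise mix of openness from compactness (the neighbourhood $V$) and the monotonicity in Lemma \ref{Lem_open_convex} (2), which together pin the whole open set $E$ into one TF class. The closure argument at the end is routine, since $C(T)$ is a closed simplicial cone whose relative interiors of faces are exactly the cones $C^\circ(U)$ for the direct summands $U$ of $T$.
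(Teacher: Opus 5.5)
Your proposal is correct and follows the paper's proof. The paper intersects the open set $\{\eta \mid M \in \calT_\eta\}$ with $\{\eta \mid \eta <_{\cw} \theta\}$, uses Lemma \ref{Lem_open_convex} to place this set in a single TF class, and applies Lemma \ref{Lem_E_TF} to get $E \subset C^\circ(T)$; it then concludes $\theta \in \overline{E} \subset C(T)$, so $\theta \in C^\circ(U)$ for a direct summand $U$ of $T$, and you make explicit the nonemptiness of $E$ and the case $U=0$, which the paper leaves implicit.
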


\begin{proof}
Take $M \in \mod A$ such that $\calT_\theta=\sfT(M)$.
We set 
\begin{align*}
E_1:=\{\eta \in K_0(\proj A)_\R \mid M \in \calT_\eta\}, \quad
E_2:=\{\eta \in K_0(\proj A)_\R \mid \eta<_{\cw}\theta\}, \quad
E:=E_1 \cap E_2.
\end{align*}
By Lemma \ref{Lem_open_convex} (1), $E_1$ is open.
Clearly, so is $E_2$.
Thus $E$ is also open.

Let $\eta \in E$.
Since $\eta<_{\cw}\theta$, 
Lemma \ref{Lem_open_convex} (2) implies 
$\calT_\eta \subset \ovcalT_\eta \subset \calT_\theta$.
Then $\calT_\theta=\sfT(M)$ and $M \in \calT_\eta$ give
$\calT_\eta=\ovcalT_\eta=\calT_\theta$.
In particular, all elements in $E$ are TF equivalent.

Thus the open set $E$ is contained in a single TF equivalence class.
Then by Lemma \ref{Lem_E_TF},
we take $T \in \twosilt A$ such that $E \subset C^\circ(T)$.

We claim $\theta \in \overline{E}$.
Since $\calT_\theta=\sfT(M)$, we get $\theta \in E_1$,
so $E_1$ is an open neighborhood of $\theta$.
Clearly, $\theta \in \overline{E_2}$ holds.
These imply $\theta \in \overline{E_1 \cap E_2}=\overline{E}$.

Therefore we get $\theta \in \overline{E}=C(T)$.
By definition, there exists a direct summand $U$ of $T$ 
such that $\theta \in C^\circ(U)$.
Thus $\theta$ is rigid.
\end{proof}

We remark that $U$ is the Bongartz cocompletion of $T$,
since $\calT_T=\calT_\eta=\calT_\theta=\calT_U$.

Now we can show Theorem \ref{Thm_semistable_bicompact}.

\begin{proof}[Proof of Theorem \ref{Thm_semistable_bicompact}]
We only show the equivalence of (a), (b), (c) and (d).
The equivalence of (a), (e), (f) and (g) is dual.

(a)$\Rightarrow$(d) is Proposition \ref{Prop_rigid_func_fin},
(d)$\Rightarrow$(b) is Proposition \ref{Prop_Smalo}, and
(b)$\Rightarrow$(c) is obvious.
Finally,
(c)$\Rightarrow$(a) follows from Proposition \ref{Prop_T_compact_rigid}.
\end{proof}

In the rest of this section, 
we assume that the base field $K$ is algebraically closed,
and only consider lattice points $\theta \in K_0(\proj A)$.
We will show the following.

\begin{Thm}\label{Thm_ovT_lattice_bicompact}
Assume that the base field $K$ is algebraically closed.
Let $\theta \in K_0(\proj A)$.
Then the following conditions are equivalent to 
the conditions \textup{(a)--(g)} in Theorem \ref{Thm_semistable_bicompact}.
\begin{enumerate2}
\item[\textup{(h)}]
The torsion class $\calT_\theta$ is cocompact.
\item[\textup{(i)}]
The torsion class $\ovcalT_\theta$ is compact.
\end{enumerate2}
\end{Thm}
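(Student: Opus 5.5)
We only need (i)$\Rightarrow$(a) and its dual (h)$\Rightarrow$(a). The reverse implications are immediate: (a)$\Rightarrow$(b)$\Rightarrow$(h) and (a)$\Rightarrow$(e)$\Rightarrow$(i) come from Theorem \ref{Thm_semistable_bicompact}. The idea is to move from the torsion class $\ovcalT_\theta$ to the semistable subcategory $\calW_\theta=\ovcalT_\theta\cap(\calT_\theta)^\perp$. This is a wide subcategory, hence an abelian length category, and its simple objects are the $\theta$-stable bricks. We then use the lattice hypothesis and $K=\overline{K}$ to show that non-rigidity forces infinitely many such stable bricks.

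\emph{Step 1: reduce compactness to $\calW_\theta$.} Suppose $\ovcalT_\theta=\sfT(M)$. Consider the canonical sequence $0\to tM\to M\to M'\to 0$ for the torsion pair $(\calT_\theta,\ovcalF_\theta)$.
\begin{enumerate2}
\item Since $M\in\ovcalT_\theta$, also $M'\in\ovcalT_\theta$, and $M'\in\ovcalF_\theta$, so $M'\in\calW_\theta$.
\item Every module in $\ovcalT_\theta$ has a filtration by factors of $M$, hence by objects of $\calT_\theta$ and factors of $M'$.
\item Take a $\theta$-stable brick $S\in\calW_\theta$. Then $\Hom_A(\calT_\theta,S)=0$. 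Applying $\Hom_A(-,S)$ along such a filtration of $S$, the bottom piece that maps onto $S$ must be a factor of $M'$.
\item Hence $S$ is a composition factor of $M'$ inside the length category $\calW_\theta$.
\end{enumerate2}
It follows that $\calW_\theta$ has only finitely many simple objects, that is, only finitely many isoclasses of $\theta$-stable bricks. We can cross-check this with the description $\ovcalT_\theta=\sfT(\calS)$ for a semibrick $\calS$ when $\theta$ is a lattice point, recalled from \cite{A-wc}.

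\emph{Step 2: a non-rigid lattice point has infinitely many stable bricks.} This is the main obstacle. I would work in the presentation space $\Hom(\theta)$ and take the generic decomposition $\theta=\theta_1\oplus\cdots\oplus\theta_m$ in the sense of Derksen--Fei, which is available because $K$ is algebraically closed and $\theta$ is integral. If every summand were rigid, then $\theta$ would lie in the cone $C^\circ(U)$ for some $U\in\twopsilt A$, so $\theta$ would be rigid. Hence a non-rigid summand $\theta_j$ exists.

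For this summand, the general presentations form an irreducible family of pairwise non-isomorphic indecomposables, and their (co)kernels are not isolated. Following the argument in \cite{A-wc} that a non-rigid $\theta$ yields infinitely many bricks, I would produce from this family infinitely many pairwise non-isomorphic $\theta$-stable bricks. Concretely, I would take a general point of the family and apply King's criterion to the relevant (co)kernel. This contradicts Step 1, so $\theta$ is rigid.

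\emph{Step 3: dualize for (h).} If $\calT_\theta$ is cocompact, then $\ovcalF_\theta=\sfF(N)$ for some $N$. Apply the dual argument to $N$: take its torsion-free part with respect to $(\ovcalT_\theta,\calF_\theta)$, which lands in $\calW_\theta$. This again gives finitely many stable bricks, and Step 2 finishes the proof.
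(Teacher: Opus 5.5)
Your proof has a genuine gap at Step 2.

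\textbf{Step 1 is fine but only yields finiteness.} The first nonzero term $X_1\subseteq S$ of the filtration cannot lie in $\calT_\theta$, so it is a factor of $M'$ lying in $\calW_\theta$, which forces $X_1=S$. But this extracts from compactness of $\ovcalT_\theta$ only that $\calW_\theta$ has finitely many simple objects. Everything then rests on Step 2: \emph{every non-rigid lattice point has infinitely many $\theta$-stable bricks}. The paper proves nothing of this kind, and it is a strong claim. Its real analogue is false: for an irrational $\theta$ in the non-rigid cone of the $3$-Kronecker quiver, $\calW_\theta=0$. So integrality must enter essentially, and your sketch does not make it do so. There are three concrete problems.
\begin{enumerate2}
\item A non-rigid summand $\theta_j$ of the generic decomposition tells you nothing about $\theta$-stability.
\item By King's criterion, with semi-invariants of weight $l\theta$ given by the functions $c^f$ for $f\in\Hom(l\theta)$, the $\theta$-semistable modules are the $X$ with $\Hom_A(\operatorname{Coker} f,X)=0=\Hom_A(X,\Ker\nu f)$ for some such $f$. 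They are modules perpendicular to presentations, not (co)kernels of presentations. For the $3$-Kronecker quiver $1\to 2$ and $\theta=[P(1)]-2[P(2)]$ (indecomposable, non-rigid), take a general $f\in\Hom(\theta)$. Then $\operatorname{Coker} f$ has dimension vector $(1,1)$ and $\theta$-value $-1$, and $\Ker\nu f$ has dimension vector $(5,2)$ and $\theta$-value $1$, so neither is $\theta$-semistable. The $\theta$-stable bricks live in dimension vector $(2,1)$.
\item Proposition \ref{Prop_brick_infin} is a global statement about $\twosilt A$ and the $g$-fan. It does not produce $\theta$-stable bricks for a prescribed $\theta$, so there is no argument there for you to follow.
\end{enumerate2}

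\textbf{The missing idea.} Apply Proposition \ref{Prop_Fei} to the generator itself rather than to stable bricks. If $\ovcalT_\theta=\sfT(M)$, then $M\in{}^\perp(\Ker\nu f)$ for some $l\ge 1$ and $f\in\Hom(l\theta)$. Hence $\ovcalT_\theta=\sfT(M)\subseteq{}^\perp(\Ker\nu f)\subseteq\ovcalT_\theta$. So $\ovcalT_\theta={}^\perp(\Ker\nu f)$ is cocompact by Example \ref{Ex_left_perp}, and (f)$\Rightarrow$(a) of Theorem \ref{Thm_semistable_bicompact} (the open-set argument via chambers) gives rigidity. Condition (h) is handled dually. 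This is exactly where $K=\overline{K}$ and $\theta\in K_0(\proj A)$ are used, and no counting of stable bricks is needed.

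A minor slip in Step 3: for $N\in\ovcalF_\theta$, it is the torsion submodule of $N$ with respect to $(\ovcalT_\theta,\calF_\theta)$ that lies in $\calW_\theta$, not the torsion-free quotient.
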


Let $\theta \in K_0(\proj A)$, and take $P_0^\theta,P_1^\theta \in \proj A$
such that $\theta=[P_0^\theta]-[P_1^\theta]$ and 
$\add P_0^\theta \cap \add P_1^\theta=\{0\}$.
Then we set $\Hom(\theta):=\Hom_A(P_1^\theta,P_0^\theta)$.
For each $f \in \Hom(\theta)$, 
we define $\ovcalT_f:={^\perp (\Ker \nu f)} \in \tors A$,
where $\nu$ is the Nakayama functor $\proj A \to \inj A$.

The following crucial property comes from geometric invariant theory
of quiver representations.

\begin{Prop}\label{Prop_Fei} \cite[Theorem 4.3]{AsI} 
(cf.~\cite[Theorem 3.6]{Fei}) 
Let $\theta \in K_0(\proj A)$.
Then we have
\begin{align*}
\ovcalT_\theta=\bigcup_{f \in \Hom(l\theta), \ l \in \Z_{\ge 1}}\ovcalT_f.
\end{align*}
\end{Prop}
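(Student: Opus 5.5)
The inclusion $\supseteq$ is elementary and holds over any field, so I would prove it first. Let $f\in\Hom(l\theta)$ with $l\ge1$, and let $M\in\ovcalT_f={^\perp(\Ker\nu f)}$. Since $\ovcalT_f$ is a torsion class, every factor module $N$ of $M$ also satisfies $\Hom_A(N,\Ker\nu f)=0$. Applying $\Hom_A(N,-)$ to $0\to\Ker\nu f\to\nu P_1^{l\theta}\to\nu P_0^{l\theta}$ and using $\Hom_A(N,\nu P)\cong D\Hom_A(P,N)$, we get an injection $D\Hom_A(P_1^{l\theta},N)\to D\Hom_A(P_0^{l\theta},N)$. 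Comparing dimensions gives $l\theta(N)\ge0$, hence $\theta(N)\ge0$, so $M\in\ovcalT_\theta$.

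For $\subseteq$, the same computation shows that $M\in\ovcalT_f$ if and only if $\Hom_A(f,M)\colon\Hom_A(P_0,M)\to\Hom_A(P_1,M)$ is surjective. This is an open condition on $f$ in the irreducible affine space $\Hom(l\theta)$. So it suffices, for fixed $M\in\ovcalT_\theta$, to find a single $l$ and $f\in\Hom(l\theta)$ with $\Hom_A(f,M)$ surjective. I would use the torsion pair $(\calT_\theta,\ovcalF_\theta)$ to get an exact sequence $0\to X\to M\to Y\to0$ with $X\in\calT_\theta$ and $Y\in\ovcalT_\theta\cap\calT_\theta^\perp=\calW_\theta$. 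Since $\Hom_A(P,-)$ is exact for projective $P$, the snake lemma shows that if $\Hom_A(f,X)$ and $\Hom_A(f,Y)$ are both surjective, then so is $\Hom_A(f,M)$.

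Two reductions make the parts compatible. First, if $f$ works for $l$, then $f^{\oplus k}\in\Hom(kl\theta)$ works for $kl$. Second, the good loci are open, hence dense, in the irreducible space $\Hom(l\theta)$, so two good loci for the same $l$ intersect. Hence it suffices to treat $X$ and $Y$ separately.

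For the semistable part $Y\in\calW_\theta$, we have $\theta(Y)=0$, so surjectivity of $\Hom_A(f,Y)$ means bijectivity, i.e.\ nonvanishing of the Schofield determinantal semi-invariant $c^f(Y)=\det\Hom_A(f,Y)$. King's theorem says that $Y$ is $\theta$-semistable if and only if some semi-invariant of weight $l\theta$, $l\ge1$, does not vanish at $Y$. The spanning theorem of Derksen--Weyman, Schofield--van den Bergh and Domokos--Zubkov says these semi-invariants are spanned by the $c^f$ with $f\in\Hom(l\theta)$. This is where algebraic closedness of $K$ is used, and I would pass from the path algebra to $A$ by restricting to the closed subvariety of $A$-modules. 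Together these give the required $f$.

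The main obstacle is the torsion part $X\in\calT_\theta$, where $\theta(X)>0$ and no semi-invariant argument applies directly. My first attempt would be induction on $\dim_K X$. Choose a generic $f$ and let $Z$ be the cokernel of $\Hom_A(f,X)$, and aim to show $Z=0$. Alternatively, I would perturb $\theta$ to a rational $\eta$ with $\theta-\eta$ a small positive multiple of $\sum_i[P(i)]$, so that $X\in\calT_\eta$ by Lemma~\ref{Lem_open_convex}~(1). I would then try to realize $X$ as a quotient of a module that is $\theta$-semistable after adding suitable simple summands. If this fails, I would fall back on Fei's argument that computes $\ovcalT_f$ for general $f$ via the general presentation of weight $l\theta$.
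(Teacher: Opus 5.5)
The paper gives no proof of this statement; it quotes it from Asai--Iyama (cf.\ Fei). Judged on its own, your plan is sound up to the torsion part. The following steps are all fine:
\begin{itemize}
\item the inclusion $\supseteq$;
\item the criterion that $M\in\ovcalT_f$ if and only if $\Hom_A(f,M)$ is surjective, and the openness of that condition;
\item the sequence $0\to X\to M\to Y\to 0$ with $Y\in\calW_\theta$;
\item the two reductions (passing to $f^{\oplus k}$, and intersecting nonempty open loci);
\item the semistable case via King's theorem and the spanning theorem.
\end{itemize}
The gap is the case $X\in\calT_\theta$, which you leave open with three strategies, none carried out. The concrete one, passing to $\eta=\theta-\epsilon\rho$ with $\rho:=\sum_i[P(i)]$ and $\epsilon$ small, makes no progress: $X$ stays in $\calT_\eta$, so you face the identical problem for $\eta$. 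Writing $X$ as a quotient of a semistable module fails in general, since $\calW_\eta$ may be $0$. Appealing to Fei's argument is not a proof.

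The missing idea is to perturb by exactly the critical amount and induct on $\dim_K X$, proving the statement for all $\theta\in K_0(\proj A)_{\mathbb{Q}}$.
\begin{enumerate}
\item For $0\ne X\in\calT_\theta$, let $t$ be the minimum of $\theta(N)/\rho(N)$ over nonzero factor modules $N$ of $X$. It is rational and positive, because $\rho(N)\in\Z_{\ge1}$ and only finitely many classes $[N]$ occur.
\item Put $\eta:=\theta-t\rho$. Then $X\in\ovcalT_\eta\setminus\calT_\eta$. So in the sequence $0\to X'\to X\to X''\to 0$ for $(\calT_\eta,\ovcalF_\eta)$, the quotient $X''\in\calW_\eta$ is nonzero, and hence $\dim_K X'<\dim_K X$.
\item Apply the induction hypothesis to $X'\in\calT_\eta$ and your semistable case to $X''$, and combine them with your reductions. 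This gives $f\colon P_1\to P_0$ in $\Hom(l\eta)$ with $lt\in\Z$ and $X\in\ovcalT_f$.
\item Set $g:=(f,0)\colon P_1\to P_0\oplus(\bigoplus_i P(i))^{\oplus lt}$. It has weight $l\theta$, and $\Ker\nu g=\Ker\nu f$, so $X\in\ovcalT_g$.
\item Finally, cancel the common summand $C$ of source and target. A general $g'$ in this Hom-space still has $\Hom_A(g',X)$ surjective (the locus is open and contains $g$), and it also has invertible $C\to C$ component. Hence $g'\cong g^\circ\oplus\mathrm{id}_C$ with $g^\circ\in\Hom(l\theta)$ and $X\in\ovcalT_{g^\circ}$.
\end{enumerate}
With this torsion case supplied, your assembly gives the inclusion $\subseteq$.
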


Then we obtain the following property.

\begin{Prop}\label{Prop_ovT_compact_cocompact}
Let $\theta \in K_0(\proj A)$.
If $\ovcalT_\theta$ is compact, then $\ovcalT_\theta$ is cocompact.
\end{Prop}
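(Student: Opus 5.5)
Suppose $\ovcalT_\theta=\sfT(M)$ for some $M \in \mod A$. The plan is to show that $\ovcalT_\theta$ coincides with a single torsion class $\ovcalT_f$ for suitable $l \ge 1$ and $f \in \Hom(l\theta)$. Since $\ovcalT_f={^\perp (\Ker \nu f)}$, Example \ref{Ex_left_perp} then shows that it is cocompact, with torsion-free class $\sfF(\Ker \nu f)$.

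First I record that each $\ovcalT_f$ is a torsion class (it is a left perpendicular category) and is contained in $\ovcalT_\theta$, by Proposition \ref{Prop_Fei}. Next I decompose $M=\bigoplus_{j=1}^r M_j$ into indecomposables. Each $M_j$ lies in $\ovcalT_\theta$, so by Proposition \ref{Prop_Fei} there are $l_j \ge 1$ and $f_j \in \Hom(l_j\theta)$ with $M_j \in \ovcalT_{f_j}$.

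The main step is to produce a single $f$ with $M \in \ovcalT_f$. I would use the structure of the union in Proposition \ref{Prop_Fei}. Put $l:=l_1 \cdots l_r$ and replace each $f_j$ by the direct sum of $l/l_j$ copies of itself, which gives an element of $\Hom(l\theta)$. Here $\Ker \nu(f_j^{\oplus k})=(\Ker \nu f_j)^{\oplus k}$, so the torsion class is unchanged. Thus all $f_j$ may be taken in the same space $\Hom(l\theta)$.

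I then want $M \in \ovcalT_g$ for a general element $g \in \Hom(l\theta)$. The condition $\Hom_A(M,\Ker \nu g)=0$ is equivalent to $\Hom_A(M,\nu P_1) \to \Hom_A(M,\nu P_0)$ being injective, where $P_1 \to P_0$ is $g$. This is a rank condition, so it is open in the irreducible affine space $\Hom(l\theta)$. Each $M_j$ therefore gives a nonempty open set $O_j$. Because $K$ is algebraically closed and hence infinite, $\bigcap_j O_j$ is nonempty, and any $f$ in it satisfies $M \in \ovcalT_f$. This openness argument is the step needing the most care, in particular the identification $\Hom_A(M,\Ker \nu g)=\Ker(\Hom_A(M,\nu g))$ and the check that the relevant maps depend linearly on $g$. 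It is also where the hypothesis that $K$ is algebraically closed is used.

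Finally, with $M \in \ovcalT_f$, the inclusions
\[
\ovcalT_\theta=\sfT(M) \subset \ovcalT_f \subset \ovcalT_\theta
\]
give $\ovcalT_\theta=\ovcalT_f={^\perp(\Ker \nu f)}$. By Example \ref{Ex_left_perp} its torsion-free class is $\sfF(\Ker \nu f)$, so $\ovcalT_\theta$ is cocompact.
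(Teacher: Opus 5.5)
Your proof is correct and has the same skeleton as the paper's proof: find $l \ge 1$ and $f \in \Hom(l\theta)$ with $M \in \ovcalT_f$, then conclude from $\ovcalT_\theta=\sfT(M)\subset\ovcalT_f\subset\ovcalT_\theta$ and Example \ref{Ex_left_perp}. The decomposition of $M$ and the genericity argument in $\Hom(l\theta)$ are valid but unnecessary, since $M$ itself lies in $\ovcalT_\theta$ and so the union in Proposition \ref{Prop_Fei} directly gives a single such $f$; this is what the paper does, and algebraic closedness of $K$ enters only through Proposition \ref{Prop_Fei}.
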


\begin{proof}
Take $M \in \mod A$ such that $\ovcalT_\theta=\sfT(M)$.
Since $M \in \ovcalT_\theta$, by Proposition \ref{Prop_Fei}, 
we take $l \in \Z_{\ge 1}$ and $f \in \Hom(l\theta)$
such that $M \in \ovcalT_f$.
Thus we get $\ovcalT_\theta=\sfT(M) \subset \ovcalT_f$.
This and Proposition \ref{Prop_Fei} give $\ovcalT_\theta=\ovcalT_f$,
so $\ovcalT_\theta$ is cocompact by Example \ref{Ex_left_perp}.
\end{proof}

Now we can show Theorem \ref{Thm_ovT_lattice_bicompact}.

\begin{proof}[Proof of Theorem \ref{Thm_ovT_lattice_bicompact}]
(i)$\Rightarrow$(f) follows from Proposition \ref{Prop_ovT_compact_cocompact},
and (e)$\Rightarrow$(i) is obvious.
Thus, (i) is equivalent to the conditions 
in Theorem \ref{Thm_semistable_bicompact}.
Dually, so is (h).
\end{proof}

\section{Numerically disjoint torsion classes}
\label{Sec_hered}

In this section, we take more direct approaches 
to Conjecture \ref{Conj_bicompact}.
For this purpose, we consider the conditions below. 

\begin{Def}
We define the following notions.
\begin{enumerate}
\item
Let $\calC,\calD \subset \mod A$ be additive subcategories.
Then we say that $\calC$ and $\calD$ are \emph{numerically disjoint}
if there exist no $X \in \calC$ and $Y \in \calD$
such that $[X]=[Y] \ne 0$ in $K_0(\mod A)$.
\item
Let $\calT \in \tors A$.
Then $\calT$ is called a \emph{numerically disjoint} torsion class
if $\calT$ and $\calT^\perp$ are numerically disjoint.
\end{enumerate}
\end{Def}

We immediately have the following.

\begin{Lem}\label{Lem_num_dis}
Let $\theta \in K_0(\proj A)_\R$.
Then $\calT_\theta \in \tors A$ is numerically disjoint.
\end{Lem}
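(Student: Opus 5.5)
The plan is to produce a strictly positive–negative separation by $\theta$. First I identify $\calT_\theta^\perp$. For a torsion pair $(\calT,\calF)$ we have $\calT^\perp=\calF$, so here $\calT_\theta^\perp=\ovcalF_\theta$, since $(\calT_\theta,\ovcalF_\theta)$ is a torsion pair by \cite[Subsection 3.1]{BKT}. It therefore suffices to show that there are no $X\in\calT_\theta$ and $Y\in\ovcalF_\theta$ with $[X]=[Y]\ne 0$ in $K_0(\mod A)$.

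Suppose such $X$ and $Y$ exist. Since $[X]\ne 0$, we have $X\ne 0$. Apply the definition of $\calT_\theta$ to the nonzero factor module $N=X$ of $X$; this gives $\theta(X)>0$. Likewise, apply the definition of $\ovcalF_\theta$ to the submodule $L=Y$ of $Y$; this gives $\theta(Y)\le 0$. Because $\theta(-)$ depends only on the class in $K_0(\mod A)$, the equality $[X]=[Y]$ yields $\theta(X)=\theta(Y)$. So $0<\theta(X)=\theta(Y)\le 0$, which is a contradiction.

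There is no real obstacle here. The only point needing care is the identification $\calT_\theta^\perp=\ovcalF_\theta$, which is part of the statement that these semistable pairs are torsion pairs. If one prefers to avoid this, the argument can be made directly. For $Y\in\calT_\theta^\perp$ and any submodule $L\subseteq Y$, take the torsion part $tL\in\calT_\theta$ of $L$. Then $tL$ maps into $Y$, and the map is zero because $Y\in\calT_\theta^\perp$; since $tL\subseteq L\subseteq Y$ is a submodule of $Y$, this forces $tL=0$. The remaining step, that torsion-freeness forces $\theta(L)\le 0$, is exactly the content of the \cite{BKT} result, so it is cleaner to invoke it directly.
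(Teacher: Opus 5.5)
Your proof is correct and is the paper's argument: via the torsion pair $(\calT_\theta,\ovcalF_\theta)$, a nonzero $X\in\calT_\theta$ has $\theta(X)>0$ while $Y\in\ovcalF_\theta$ has $\theta(Y)\le 0$, so $[X]\ne[Y]$. The detour in your last paragraph adds nothing, since the $tL=0$ step only reproves that $\calT_\theta^\perp$ is closed under submodules; if you want to avoid citing \cite{BKT}, note instead that a submodule $L\subseteq Y$ of minimal dimension with $\theta(L)>0$ lies in $\calT_\theta$ and maps nonzero into $Y$.
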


\begin{proof}
If $X \in \calT_\theta$ and $Y \in \ovcalF_\theta$ are nonzero, 
then we have $\theta(X)>0 \ge \theta(Y)$, which implies $[X] \ne [Y]$.
\end{proof}

We aim to show the following result.

\begin{Thm}\label{Thm_bicom_num_dis}
Let $\calT \in \tors A$.
Then $\calT$ is bicompact and numerically disjoint
if and only if $\calT$ is functorially finite.
\end{Thm}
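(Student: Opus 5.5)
For the ``if'' direction, I would just combine earlier results. If $\calT$ is functorially finite, it is bicompact by Proposition \ref{Prop_Smalo}. By Propositions \ref{Prop_AIR} and \ref{Prop_Yurikusa}, we can write $\calT=\calT_\theta$ for any $\theta \in C^\circ(T)$, where $T \in \twosilt A$ corresponds to $\calT$. Lemma \ref{Lem_num_dis} then shows that $\calT$ is numerically disjoint.

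For the ``only if'' direction, the plan is to produce $\theta \in K_0(\proj A)_\R$ with $\calT=\calT_\theta$ and then apply Theorem \ref{Thm_semistable_bicompact} ((c)$\Rightarrow$(d)). Take $M,N \in \mod A$ with $\calT=\sfT(M)$ and $\calT^\perp=\sfF(N)$. Let $C_1 \subset K_0(\mod A)_\R$ be the convex cone generated by the finitely many classes $[X]$ with $X$ a nonzero factor module of $M$. Let $C_2$ be the cone generated by the classes $[Y]$ with $Y$ a nonzero submodule of $N$. Both are rational polyhedral cones. Both are pointed, since all generators lie in $\sum_i \R_{\ge 0}[S(i)] \setminus \{0\}$.

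The key step is to show $C_1 \cap C_2=\{0\}$ using numerical disjointness. The intersection is a rational polyhedral cone. If it were nonzero, it would contain a nonzero lattice point $v$. By Carathéodory/linear programming over $\mathbb{Q}$, we could write $v=\sum_i a_i[X_i]=\sum_j b_j[Y_j]$ with $a_i,b_j \in \mathbb{Q}_{\ge 0}$. Clearing denominators gives $X:=\bigoplus X_i^{\oplus a_i} \in \Fac M \subset \calT$ and $Y:=\bigoplus Y_j^{\oplus b_j} \in \Sub N \subset \calT^\perp$ with $[X]=[Y]\ne 0$, contradicting numerical disjointness.

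Next I would separate the two cones. Consider $C:=C_1+(-C_2)$, again a polyhedral cone. It is pointed: if $c_1-c_2=-(c_1'-c_2')$, then $c_1+c_1'=c_2+c_2' \in C_1\cap C_2=\{0\}$, and pointedness of $C_1$ and $C_2$ forces all four terms to vanish. A pointed polyhedral cone admits a linear form strictly positive on its nonzero elements. Via the nondegenerate Euler form this form is some $\theta \in K_0(\proj A)_\R$, with $\theta>0$ on $C_1\setminus\{0\}$ and $\theta<0$ on $C_2\setminus\{0\}$.

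Now I would check that $\calT=\calT_\theta$. We have $M \in \calT_\theta$, so $\calT=\sfT(M)\subset \calT_\theta$. We also have $N \in \calF_\theta \subset \ovcalF_\theta=\calT_\theta^\perp$, so $\calT^\perp=\sfF(N)\subset \calT_\theta^\perp$, which gives $\calT_\theta \subset \calT$. Hence $\calT_\theta=\calT$ is compact, and Theorem \ref{Thm_semistable_bicompact} shows it is functorially finite.

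I expect the main obstacle to be the rationality argument, namely passing from a real nonzero point of $C_1\cap C_2$ to actual modules. This is handled by the fact that the intersection of rational polyhedral cones is rational polyhedral, so it has a nonzero rational point with rational nonnegative coefficients.
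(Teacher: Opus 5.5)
Your proof is correct and follows essentially the paper's route: separate $\cone\calT$ from $\cone\calT^\perp$ by a linear form $\theta$ (these cones are exactly your $C_1$ and $C_2$), deduce $(\calT,\calT^\perp)=(\calT_\theta,\ovcalF_\theta)$, and apply Theorem \ref{Thm_semistable_bicompact}. The differences are cosmetic: you work directly with the cones generated by factor modules of $M$ and submodules of $N$ instead of citing Lemma \ref{Lem_compact_poly}, and you spell out the rationality step that the paper leaves implicit in Lemma \ref{Lem_C_F_C_S} (a)$\Rightarrow$(b).
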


To prove this, we introduce another condition on subcategories.
For each subset $S \subset K_0(\mod A)_\R$,
we define $\cone S$ as the cone in $K_0(\mod A)_\R$ generated by $S$.

\begin{Def}\label{Def_C_F_C_S}
Let $\calC \subset \mod A$ be an additive subcategory.
\begin{enumerate}
\item
We set $\cone \calC:=\cone \{[X] \mid X \in \calC\} \subset K_0(\mod A)_\R$.
\item
We say that $\calC$ is \emph{polyhedral} 
if $\cone \calC$ is a polyhedral cone.
\end{enumerate}
\end{Def}

The following observation is useful.

\begin{Lem}\label{Lem_compact_poly}
If $\calT \in \tors A$ is compact, then $\calT$ is polyhedral.
\end{Lem}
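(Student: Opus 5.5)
We need to show: if $\calT=\sfT(M)$, then $\cone\calT$ is polyhedral. The plan is to prove the stronger statement that $\cone\calT$ is generated by the finitely many classes $[N]$, where $N$ runs over the factor modules of $M$.

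Recall that $\sfT(M)=\Filt(\Fac M)$. First take $X \in \Fac M$, with a surjection $M^{\oplus s}\to X$. I will show by induction on $s$ that $[X]$ is a sum of classes of factor modules of $M$. Let $X'$ be the image of the first summand $M \to X$, which is a factor module of $M$. Then $X/X'$ is a factor module of $M^{\oplus (s-1)}$, and $[X]=[X']+[X/X']$, so the induction closes.

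Next take $X \in \Filt(\Fac M)$. Its class is the sum of the classes of its filtration factors, each of which lies in $\Fac M$. So $[X]$ is a nonnegative integer combination of the set
\[
F:=\{[N] \mid N \text{ a factor module of } M\} \subset K_0(\mod A).
\]
Conversely, every factor module of $M$ lies in $\calT$, so $F\subset\{[X]\mid X\in\calT\}$. Therefore $\cone\calT=\cone F$.

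It remains to see that $F$ is finite. Every factor module $N$ of $M$ has dimension vector bounded componentwise by that of $M$; this is the same finiteness already used in Lemma \ref{Lem_open_convex}~(1). Since $[N]\in\bigoplus_i\Z_{\ge0}[S(i)]$ is bounded by $[M]$, only finitely many values occur. A cone generated by a finite set is polyhedral, which proves the lemma.

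The only point needing care is the reduction from $\Fac M$ to single factor modules via the image filtration; the rest is routine.
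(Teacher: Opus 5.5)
Your proof is correct and follows the same argument as the paper. Both rest on two facts: every module in $\sfT(M)=\Filt(\Fac M)$ is filtered by factor modules of $M$, and the set of classes of factor modules of $M$ is finite. Your image-filtration induction simply spells out the passage from $\Fac M$ to single factor modules, which the paper asserts without comment.
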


\begin{proof}
Take $M \in \mod A$ such that $\calT=\sfT(M)$.
Then every module in $\calT$ is filtered by factor modules of $M$.
Since $\{[M] \mid \text{$M$ is a factor module}\} \subset K_0(\mod A)$ 
is a finite set,
$\calT=\sfT(M)$ is polyhedral.
\end{proof}

Then we have the next characterizations.

\begin{Lem}\label{Lem_C_F_C_S}
Let $\calT \in \tors A$ and $\calF \in \torf A$.
Then the following conditions are equivalent.
\begin{enumerate2}
\item
The subcategories $\calT$ and $\calF$ are numerically disjoint.
\item
We have $\cone \calT \cap \cone \calF=\{0\}$.
\item
The cone $C:=\{x-y \mid x \in \cone \calT, \ y \in \cone \calF\}$
is strongly convex.
\end{enumerate2}
If $\calT$ and $\calF$ are polyhedral, 
then \textup{(a)--(c)} are equivalent to
the condition below.
\begin{enumerate2}
\item[\textup{(d)}]
There exists $\theta \in K_0(\proj A)_\R$ such that
$\calT \subset \calT_\theta$ and $\calF \subset \calF_\theta$.
\end{enumerate2}
\end{Lem}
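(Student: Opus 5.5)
We prove the cycle (a)$\Rightarrow$(b)$\Rightarrow$(c)$\Rightarrow$(a) in general, and then (c)$\Rightarrow$(d)$\Rightarrow$(a) when $\calT$ and $\calF$ are polyhedral.

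\emph{(a)$\Rightarrow$(b).} We use the closure properties of $\calT$ and $\calF$ under direct sums. Suppose $0 \ne v \in \cone \calT \cap \cone \calF$. Since $v$ lies in both cones, we can write $v=\sum a_i [X_i]=\sum b_j [Y_j]$ with $a_i,b_j>0$, $X_i \in \calT$ and $Y_j \in \calF$. The relation $\sum a_i[X_i]-\sum b_j[Y_j]=0$ is a linear relation among finitely many integer vectors. Hence the set of nonnegative coefficient vectors satisfying it is a rational polyhedral cone. That cone contains the given positive solution, so it also contains a rational solution with the same sign pattern. Clearing denominators gives positive integers $a_i',b_j'$ with $\sum a_i'[X_i]=\sum b_j'[Y_j]$. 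Put $X=\bigoplus X_i^{\oplus a_i'} \in \calT$ and $Y=\bigoplus Y_j^{\oplus b_j'} \in \calF$. Then $[X]=[Y]\ne 0$, because classes of nonzero modules lie in the positive orthant. This contradicts (a).

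\emph{(b)$\Rightarrow$(c).} Both cones lie in the closed positive orthant $\sum_i \R_{\ge 0}[S(i)]$. Suppose $C$ contains a line, so $x-y=-(x'-y')$ for some $x,x' \in \cone\calT$ and $y,y' \in \cone\calF$ with $x-y \ne 0$. Then $x+x'=y+y'$ lies in $\cone\calT\cap\cone\calF=\{0\}$. Since the cones are pointed inside the orthant, this forces $x=x'=0$ and $y=y'=0$, a contradiction.

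\emph{(c)$\Rightarrow$(a).} If $[X]=[Y]\ne 0$ with $X\in\calT$ and $Y\in\calF$, then $[X]-[Y]=0$ while $[X] \ne 0$. I expect strong convexity to be read on the closure, as in the usual convention, so we argue with $\overline{C}$. For $t>0$ we have $t[X]-0 \in C$ and $0-t[Y]\in C$, so $\pm t[X] \in C$. Thus $C$ contains the line $\R[X]$, contradicting strong convexity.

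\emph{(d)$\Rightarrow$(a).} Take nonzero $X\in\calT\subset\calT_\theta$ and $Y\in\calF\subset\calF_\theta$. Then $\theta(X)>0>\theta(Y)$, so $[X]\ne[Y]$.

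\emph{(c)$\Rightarrow$(d), the main step.} Assume $\calT$ and $\calF$ are polyhedral. Then $C=\cone\calT+(-\cone\calF)$ is a polyhedral, hence closed, strongly convex cone. Its dual cone therefore has nonempty interior, so there is a linear form $\theta$ with $\theta>0$ on $C\setminus\{0\}$. Because the Euler form is a perfect pairing after tensoring with $\R$, we may identify $\theta$ with an element of $K_0(\proj A)_\R$. Concretely, $\theta(v)>0$ for nonzero $v\in\cone\calT$ and $\theta(w)<0$ for nonzero $w\in\cone\calF$.

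It remains to show $\calT\subset\calT_\theta$. Let $M\in\calT$ and let $N\ne0$ be a factor module of $M$. Since $\calT$ is closed under factor modules, $N\in\calT$. So $[N]$ is a nonzero element of $\cone\calT$, and $\theta(N)>0$. Dually, $\calF$ is closed under submodules, so $\calF\subset\calF_\theta$.

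The delicate point is producing $\theta$ strictly positive on all of $C\setminus\{0\}$. This is exactly where polyhedrality is needed: it makes $C$ closed, so strong convexity yields an interior point of the dual cone. Without polyhedrality, $\overline{C}$ could contain a line even though $C$ does not.
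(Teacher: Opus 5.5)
Your proof is correct and follows the paper's route. You use the same implications (a)$\Rightarrow$(b)$\Rightarrow$(c)$\Rightarrow$(a) and (c)$\Rightarrow$(d)$\Rightarrow$(a), and obtain (c)$\Rightarrow$(d) from a linear form strictly positive on the pointed polyhedral cone $C$. Your rational-solution argument in (a)$\Rightarrow$(b) supplies the integrality step that the paper only asserts.

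Drop the aside in (c)$\Rightarrow$(a) about reading strong convexity on $\overline{C}$. The convention has to be $C\cap(-C)=\{0\}$, because under the closure reading (b)$\Rightarrow$(c) fails in general, as your own closing remark anticipates. Your argument there already produces the line $\mathbb{R}[X]$ inside $C$ itself, so nothing else needs to change.
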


\begin{proof}
(a)$\Rightarrow$(b)
Assume the contrary $\cone \calT \cap \cone \calF \ne \{0\}$ of (b).
Then there exist $X_1,\ldots,X_p \in \calT$, $Y_1,\ldots,Y_q \in \calF$
and positive integers $a_1,\ldots,a_p,b_1,\ldots,b_q \in \Z_{\ge 1}$
such that
\begin{align*}
\sum_{i=1}^p a_i[X_i]=\sum_{j=1}^q b_j[Y_j] \ne 0 \in K_0(\mod A).
\end{align*}
Set $X:=\bigoplus_{i=1}^p X_i^{\oplus a_i}$ and 
$Y:=\bigoplus_{j=1}^q Y_j^{\oplus b_j}$.
Then we get $X \in \calT$, $Y \in \calF$ and $[X]=[Y] \ne 0$.
Thus (a) is denied.

(b)$\Rightarrow$(c) and (c)$\Rightarrow$(a) are easy.
Moreover (d)$\Rightarrow$(a) is Lemma \ref{Lem_num_dis}.

We additionally assume that $\calT,\calF$ are polyhedral.

(c)$\Rightarrow$(d)
Since $\calT$ and $\calF$ are polyhedral,
$C$ is a polyhedral cone.
This and the assumption that $C$ is strongly convex allow us 
to take $\theta \in K_0(\proj A)_\R$
such that any nonzero $v \in C \setminus \{0\}$ satisfies $\theta(v)>0$.
Let $X \in \calT$. For any factor module $X'$ of $X$,
we have $\theta(X')>0$, since $X' \in \calT$ gives $[X'] \in C$.
Thus $X \in \calT_\theta$ holds.
Therefore we obtain $\calT \subset \calT_\theta$.
Dually, $\calF \subset \calF_\theta$ follows.
\end{proof}

Now we are able to show Theorem \ref{Thm_bicom_num_dis}.

\begin{proof}[Proof of Theorem \ref{Thm_bicom_num_dis}]
Since $\calT$ is bicompact,
Lemma \ref{Lem_compact_poly} and its dual imply that
both $\calT$ and $\calT^\perp$ are polyhedral.
Then Lemma \ref{Lem_C_F_C_S} (a)$\Rightarrow$(d) allows us to take 
$\theta \in K_0(\proj A)_\R$
such that $\calT \subset \calT_\theta$ and 
$\calF \subset \calF_\theta \subset \ovcalF_\theta$.
Since $(\calT,\calT^\perp)$ and $(\calT_\theta,\ovcalF_\theta)$ 
are torsion pairs in $\mod A$,
we have $(\calT,\calT^\perp)=(\calT_\theta,\ovcalF_\theta)$.
Thus $\calT=\calT_\theta$ is bicompact, 
so Theorem \ref{Thm_semistable_bicompact} (b)$\Rightarrow$(d) implies that
$\calT=\calT_\theta$ is functorially finite.
\end{proof}

In the rest of this section, let $K$ be an algebraically closed.
Then we may assume that a finite dimensional algebra $A$
is of the form $KQ/I$ with $Q=(Q_0,Q_1)$ a finite quiver 
and $I \subset KQ$ an admissible ideal.
We briefly recall basic notions on representation varieties.
See \cite{CB,DW} for details.

Let $d \in (\Z_{\ge 0})^{Q_0}$ be a dimension vector.
We define $\rep(Q,d)$ as the direct product 
\begin{align*}
\prod_{(\alpha \colon i \to j) \in Q_1} \Hom_K(K^{d_i},K^{d_j})
\end{align*}
equipped with the Zariski topology.
Moreover $\rep(A,d)$ denotes the closed subset of $\rep(Q,d)$
given by the relations corresponding to $I$.
We call $\rep(Q,d)$ and $\rep(A,d)$ the \emph{representation varieties}
of $Q$ and $A$ for the dimension vector $d$. 

While $\rep(Q,d)$ is always irreducible, $\rep(A,d)$ may not be irreducible.
We write $\Irr(A,d)$ for the set of irreducible components of $\rep(A,d)$.

Let $\calZ \in \Irr(A,d)$.
For a full subcategory $\calC \subset \mod A$,
we define $\calZ \cap \calC:=\{X \in \calZ \mid X \in \calC\}$.
Since the functions
$\dim_K \Hom(?,M) \colon \calZ \to \Z_{\ge 0}$ and 
$\dim_K \Hom(M,?) \colon \calZ \to \Z_{\ge 0}$
are upper-semicontinuous,
we obtain the following.

\begin{Lem}\label{Lem_Hom_open}
Let $A=KQ/I$ as above, $d \in (\Z_{\ge 0})^{Q_0}$ and $\calZ \in \Irr(Q,d)$.
For each $M \in \mod A$, both
$\calZ \cap {^\perp M}$ and $\calZ \cap {M^\perp}$ are 
open subsets of $\calZ$.
\end{Lem}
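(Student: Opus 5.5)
Both sets are preimages of open sets under upper semicontinuous functions, so openness follows directly. I will argue for $\calZ \cap {^\perp M}$; the case of ${M^\perp}$ is symmetric.

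First, $X \in {^\perp M}$ means exactly $\Hom_A(X,M)=0$, that is, $\dim_K \Hom_A(X,M) \le 0$. Hence
\begin{align*}
\calZ \cap {^\perp M}=\{X \in \calZ \mid \dim_K \Hom_A(X,M) < 1\}.
\end{align*}
Since $\dim_K \Hom(?,M)$ is upper-semicontinuous on $\calZ$, every set of the form $\{X \mid \dim_K\Hom(X,M)<c\}$ is open, and the case $c=1$ gives the claim. The same reasoning with $\dim_K \Hom(M,?)$ shows that $\calZ \cap {M^\perp}=\{X \in \calZ \mid \dim_K \Hom_A(M,X)<1\}$ is open.

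The only step that needs justification is the upper semicontinuity itself. I would present $\Hom_A(X,M)$ for a point $X \in \rep(A,d)$ as the kernel of a linear map whose matrix depends polynomially on $X$. Write $e$ for the dimension vector of $M$ and $M_\alpha$ for the matrices of $M$. Then $\Hom_A(X,M)$ is the space of tuples $(\phi_i)_{i\in Q_0}$ with $\phi_i \in \Hom_K(K^{d_i},K^{e_i})$ satisfying $\phi_j X_\alpha = M_\alpha \phi_i$ for each arrow $\alpha \colon i \to j$. Because $X$ already satisfies the relations in $I$, compatibility with the arrows suffices for $A$-linearity. This is the kernel of the map
\begin{align*}
\Phi_X \colon \bigoplus_{i} \Hom_K(K^{d_i},K^{e_i}) \to \bigoplus_{\alpha} \Hom_K(K^{d_i},K^{e_j}), \quad (\phi_i) \mapsto (\phi_j X_\alpha - M_\alpha \phi_i),
\end{align*}
whose entries are linear in the coordinates of $X$. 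Therefore $\dim_K \Hom_A(X,M)=N-\mathrm{rank}\,\Phi_X$ for a constant $N$. The condition $\mathrm{rank}\,\Phi_X \ge r$ is the nonvanishing of some $r\times r$ minor, which is Zariski open, so the dimension function is upper semicontinuous on all of $\rep(A,d)$. It stays so after restricting to the closed subset $\calZ$ with the subspace topology. For $\Hom(M,?)$ one uses the analogous map $(\psi_i)\mapsto(\psi_j M_\alpha - X_\alpha \psi_i)$.

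Nothing here is a real obstacle. The irreducibility of $\calZ$ is not needed, and neither is the algebraic closedness of $K$ beyond the standing setup $A=KQ/I$. The only point to check is that $A$-homomorphisms are exactly the arrow-compatible tuples, and this holds because $A=KQ/I$ is generated by the vertices and arrows.
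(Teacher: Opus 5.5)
Your proposal is correct and follows the paper's argument. The paper deduces the lemma in one line from the upper semicontinuity of $\dim_K \Hom(?,M)$ and $\dim_K \Hom(M,?)$ on $\calZ$. You additionally justify that semicontinuity by writing each Hom space as the kernel of a linear map whose matrix depends polynomially on the point of $\calZ$.
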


This gives the following.

\begin{Prop}\label{Prop_either_empty}
Let $A=KQ/I$ as above, $d \in (\Z_{\ge 0})^{Q_0}$ and $\calZ \in \Irr(A,d)$.
If $\calT \in \tors A$ is bicompact and $d \ne 0$,
then at least one of $\calZ \cap \calT$ and $\calZ \cap \calT^\perp$ is empty.
\end{Prop}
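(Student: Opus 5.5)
We use bicompactness to express both $\calT$ and $\calT^\perp$ through perpendicular categories of single modules, and then apply Lemma \ref{Lem_Hom_open} together with the irreducibility of $\calZ$.

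Since $\calT$ is compact, write $\calT=\sfT(M)$ for some $M \in \mod A$. Then $\calT^\perp=M^\perp$, because a module $X$ satisfies $\Hom_A(\sfT(M),X)=0$ if and only if $\Hom_A(M,X)=0$: the category ${^\perp X}$ is a torsion class, so it contains $M$ exactly when it contains $\sfT(M)$. Similarly, since $\calT$ is cocompact, write $\calT^\perp=\sfF(N)$; dually, $\calT={^\perp \sfF(N)}={^\perp N}$.

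By Lemma \ref{Lem_Hom_open}, both $\calZ \cap \calT=\calZ \cap {^\perp N}$ and $\calZ \cap \calT^\perp=\calZ \cap M^\perp$ are open subsets of $\calZ$. Suppose both are nonempty. Since $\calZ$ is irreducible, any two nonempty open subsets of $\calZ$ meet. Hence there is a point $X \in \calZ$ with $X \in \calT \cap \calT^\perp$. But $\calT \cap \calT^\perp=\{0\}$, because $\Hom_A(X,X)=0$ forces $X=0$. On the other hand, $X$ has dimension vector $d \ne 0$, so $X \ne 0$. This is a contradiction, so at least one of the two intersections is empty.

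The only point that needs care is the identification of $\calT^\perp$ with $M^\perp$ and of $\calT$ with ${^\perp N}$. This is what allows the upper-semicontinuity statement of Lemma \ref{Lem_Hom_open}, which concerns a single module, to apply; without these identifications, $\calZ \cap \calT$ and $\calZ \cap \calT^\perp$ would not obviously be open. Both identifications are standard facts about torsion pairs, and the rest of the argument is formal.
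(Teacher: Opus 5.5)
Your proposal is correct and follows essentially the same route as the paper: rewrite $(\calT,\calT^\perp)$ as $({^\perp N},M^\perp)$ using $\calT=\sfT(M)$ and $\calT^\perp=\sfF(N)$, apply Lemma \ref{Lem_Hom_open} to see that both intersections are open in $\calZ$, and use irreducibility to get a nonzero module in $\calT\cap\calT^\perp=\{0\}$, which is a contradiction. Your justification of the identifications $\sfT(M)^\perp=M^\perp$ and ${^\perp \sfF(N)}={^\perp N}$ is sound; the paper simply takes these as known.
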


\begin{proof}
Since $\calT$ is bicompact, we take $M,N \in \mod A$
such that $(\calT,\calT^\perp)=(\sfT(M),\sfF(N))$.
Then $(\calT,\calT^\perp)=({^\perp N},M^\perp)$ holds.
Thus we get $\calZ \cap \calT=\calZ \cap {^\perp N}$ and 
$\calZ \cap \calT^\perp=\calZ \cap M^\perp$.

Now assume that both of them are nonempty.
Then they are open dense in $\calZ$ by Lemma \ref{Lem_Hom_open}.
Since $\calZ$ is irreducible,
the intersection $\calZ \cap \calT \cap \calT^\perp=\calZ \cap \{0\}$ 
is nonempty.
This implies $d=0$, a contradiction.
\end{proof}

Now we can show the following main result for hereditary algebras.

\begin{Thm}\label{Thm_hered}
Assume that $K$ is algebraically closed, and that $A$ is hereditary.
Then Conjecture \ref{Conj_bicompact} holds;
that is, if $\calT \in \tors A$ is bicompact,
then $\calT$ is functorially finite.
\end{Thm}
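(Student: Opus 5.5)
By Theorem \ref{Thm_bicom_num_dis}, it suffices to show that a bicompact torsion class $\calT$ over a hereditary algebra $A=KQ$ ($K$ algebraically closed; hereditary means the admissible ideal $I$ is zero) is numerically disjoint. Suppose instead that there exist $X \in \calT$ and $Y \in \calT^\perp$ with $[X]=[Y] \ne 0$ in $K_0(\mod A)$. Let $d \in (\Z_{\ge 0})^{Q_0}$ be this common dimension vector, so $d \ne 0$.

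Since $A$ is hereditary, $\rep(A,d)=\rep(Q,d)$ is an affine space. In particular it is irreducible, so $\Irr(A,d)$ consists of the single component $\calZ=\rep(Q,d)$. Every module of dimension vector $d$ is a point of $\calZ$ up to isomorphism. Hence $X$ gives a point of $\calZ \cap \calT$, and $Y$ gives a point of $\calZ \cap \calT^\perp$, so both sets are nonempty.

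This contradicts Proposition \ref{Prop_either_empty}, which says that for bicompact $\calT$ and $d \ne 0$, at least one of $\calZ \cap \calT$ and $\calZ \cap \calT^\perp$ is empty. Therefore $\calT$ is numerically disjoint, and Theorem \ref{Thm_bicom_num_dis} gives that $\calT$ is functorially finite. The converse direction of Conjecture \ref{Conj_bicompact} is Proposition \ref{Prop_Smalo}.

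There is no real obstacle. The only points to check are that hereditary over an algebraically closed field means $A \cong KQ$ up to Morita equivalence, which changes neither torsion classes nor $K_0$, and that $\rep(Q,d)$ is irreducible so that modules of equal class lie in the same component. The essential input is Proposition \ref{Prop_either_empty}.
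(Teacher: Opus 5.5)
Your proposal is correct and follows the paper's own proof: reduce to numerical disjointness via Theorem~\ref{Thm_bicom_num_dis}, then observe that $X$ and $Y$ give points of $\calT$ and $\calT^\perp$ in the irreducible variety $\rep(Q,d)$, contradicting Proposition~\ref{Prop_either_empty}. Your added remarks on the Morita reduction to $KQ$ and on the converse via Proposition~\ref{Prop_Smalo} are harmless and accurate.
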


\begin{proof}
We may assume that $A=KQ$ for a finite acyclic quiver $Q$.

Since $\calT$ is bicompact,
it suffices to show that $\calT$ is numerically disjoint
by Theorem \ref{Thm_bicom_num_dis}.

Assume that there exist $X \in \calT$ and $Y \in \calT^\perp$ such that $d:=[X]=[Y] \ne 0$.
Then both $\rep(A,d) \cap \calT$ and $\rep(A,d) \cap \calT^\perp$ 
are nonempty. 
This contradicts Proposition \ref{Prop_either_empty},
because $\rep(A,d)=\rep(Q,d)$ is already irreducible.
Thus $\calT$ is numerically disjoint as desired.
\end{proof}

\section{Conjectures on brick infiniteness}\label{Sec_Conj}

In this section, we apply our results so far
to two conjectures by Enomoto \cite{Enomoto} 
and Demonet \cite{Demonet} on brick infiniteness.
We here recall some terminology.

A module $M \in \mod A$ is called a \emph{brick}
if $\End_A(M)$ is a division $K$-algebra
(it is equivalent to $\End_A(M) \simeq K$ if $K$ is algebraically closed),
and a \emph{semibrick} $\calS$ is the set of isoclasses of bricks
such that $\Hom_A(M,N)=0$ for any $M \ne N \in \calS$.
A semibrick is said to be \emph{finite} (resp.~\emph{infinite})
if it is a finite (resp.~infinite) set of isoclasses of bricks.

We say that $A$ is \emph{brick finite} 
if there are only finitely many isoclasses of bricks in $\mod A$,
and otherwise, $A$ is said to be \emph{brick infinite}.

By definition, if there exists an infinite semibrick in $\mod A$,
then $A$ is brick infinite.
Enomoto proposed that the converse also holds.

\begin{Conj}[Enomoto Conjecture]
\label{Conj_Enomoto}\cite[Conjecture 5.12]{Enomoto}
Let $A$ be brick infinite.
Then there exists an infinite semibrick in $\mod A$.
\end{Conj}

This is one of the two conjectures we consider.
The other conjecture by Demonet is based 
on the following known characterizations of brick infinite algebras.

\begin{Prop}\label{Prop_brick_infin}
\cite[Theorem 4.2]{DIJ} \cite[Theorem 4.7]{A-wc}
For a finite dimensional algebra $A$, the following conditions are equivalent.
\begin{enumerate2}
\item
The algebra $A$ is brick infinite.
\item
The set $\twosilt A$ is infinite.
\item
There exists $\theta \in K_0(\proj A)_\R$ which is not rigid.
\end{enumerate2}
\end{Prop}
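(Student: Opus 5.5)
We would prove the three equivalences by passing through torsion classes, with Theorem \ref{Thm_semistable_bicompact} handling the link to rigidity. The auxiliary statement we want is the following: \emph{$A$ is brick finite if and only if $\twosilt A$ is finite, if and only if every torsion class is functorially finite}.

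\textbf{Step 1: (a) and (b) through torsion classes.}

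Suppose first that $A$ is brick finite. Then there are only finitely many semibricks. The assignment $\calT \mapsto$ (the semibrick of brick labels of the arrows of the Hasse quiver of $\tors A$ starting at $\calT$) is injective on $\tors A$, because $\calT$ is recovered as $\sfT$ of that semibrick (the brick-labelling results cited in the introduction, e.g.\ \cite{DIRRT,BCZ,A-wc}). Hence $\tors A$ is finite, and in particular $\ftors A$ is finite. By Proposition \ref{Prop_AIR}, $\twosilt A$ is then finite, which gives (b)$\Rightarrow$(a).

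Conversely, suppose $\twosilt A$ is finite, so $\ftors A$ is finite. We want to show $\tors A=\ftors A$. Given $\calT \in \tors A$, consider the largest functorially finite torsion class $\calU \subset \calT$; it exists because $\ftors A$ is finite and is closed under joins (the join of $\Fac M$ and $\Fac M'$ is $\Fac(M\oplus M')$). If $\calU \ne \calT$, then $\calU$ has an upper mutation (Hasse arrow) in $\ftors A$ given by $\tau$-tilting mutation \cite{AIR}. The key claim is that this mutation can be chosen to stay inside $\calT$, i.e.\ some brick $S \in \calT \cap \calU^\perp$ gives a cover $\calU \subsetneq \sfT(\calU\cup\{S\})$ which is still functorially finite. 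That contradicts maximality, so $\calU=\calT$. Once all torsion classes are functorially finite, bricks inject into $\tors A$ via $S\mapsto \sfT(S)$, so $A$ is brick finite. This proves (a)$\Leftrightarrow$(b) and also the auxiliary statement. The cover-inside-$\calT$ claim is the step we expect to be the main obstacle: showing that a minimal extension of $\calU$ inside $\calT$ is functorially finite when $\ftors A$ is finite. This is the content of \cite[Theorem 4.2]{DIJ}, and we would follow their argument, taking a minimal element of $\calT\cap\calU^\perp$ and using the Bongartz completion.

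\textbf{Step 2: (a)$\Rightarrow$(c).}

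Argue contrapositively. If every $\theta$ is rigid, we claim $A$ is brick finite. If $A$ were brick infinite, Step 1 would give a torsion class that is not functorially finite. Our tool would be Theorem \ref{Thm_semistable_bicompact}: if $\calT_\theta$ is not functorially finite then $\theta$ is non-rigid. So it suffices to produce $\theta$ with $\calT_\theta$ not functorially finite. To do this, take an infinite family of 2-term silting complexes and their chambers $C^\circ(T)$. Choose a limit direction $\theta$ of the rays in the union of these chambers, using compactness of the unit sphere. Then show $\theta$ cannot lie in the open star of any single $C(U)$, because each such star meets only finitely many chambers. Here we would use that the $g$-fan is a non-singular fan \cite{AHIKM}, so locally only finitely many cones meet near a point in the relative interior of a cone. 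Hence $\theta$ is not rigid.

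\textbf{Step 3: (c)$\Rightarrow$(a).}

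If $\theta$ is not rigid, Theorem \ref{Thm_semistable_bicompact} (d)$\Rightarrow$(a) shows that $\calT_\theta$ is not functorially finite. By the auxiliary statement of Step 1, $A$ is then brick infinite.
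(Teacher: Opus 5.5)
\textbf{The gap is in Step~2, i.e.\ (a)$\Rightarrow$(c).} Your key claim is false. You argue that a neighbourhood of a point of $C^\circ(U)$ meets only finitely many chambers because the $g$-fan is non-singular. But non-singularity says nothing about local finiteness, and for brick infinite algebras the $g$-fan can fail to be locally finite even at rigid points.

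Here is an example. Let $A=\Lambda\times K$ with $\Lambda$ the Kronecker algebra, and let $P$ be the projective module of the factor $K$. The $g$-fan of $A$ is the product of the Kronecker fan with $\{\R_{\ge0}[P],\R_{\le0}[P],0\}$. So the ray $C(P)$ lies in the infinitely many chambers $C(T\oplus P)$, $T\in\twosilt\Lambda$. Take unit vectors $v_k$ in pairwise distinct Kronecker chambers $C^\circ(T_k)$. Then the points $\theta_k=[P]+v_k/k$ lie in pairwise distinct chambers $C^\circ(T_k\oplus P)$. Their directions converge to that of $[P]\in C^\circ(P)$, which is rigid.

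So taking a limit direction of points from infinitely many chambers does not produce a non-rigid element, and Step~2 as written proves nothing. The appeal to Theorem~\ref{Thm_semistable_bicompact} there is only a reformulation and is never actually used. What is missing is the substance of \cite[Theorem 4.7]{A-wc}. Assuming every $\theta$ is rigid, you must \emph{derive} that each nonzero $\theta\in C^\circ(U)$ has a neighbourhood covered by finitely many chambers. One way is induction on the number of simples via $\tau$-tilting reduction. The chambers containing $C(U)$ correspond to $\twosilt$ of the reduced algebra. You would have to transfer the hypothesis that every element is rigid to that algebra, and show that the resulting finitely many chambers cover a neighbourhood of $C^\circ(U)$. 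Only then does compactness of the unit sphere give finiteness of $\twosilt A$.

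\textbf{The other steps are sound or easily repaired.} Step~3 is correct and differs from the paper, which simply cites \cite{A-wc}. It combines Theorem~\ref{Thm_semistable_bicompact} (a non-rigid $\theta$ has $\calT_\theta$ not functorially finite) with the result of \cite{DIJ} that over a brick finite algebra every torsion class is functorially finite. This is a nice use of the paper's own result.

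Step~1 follows \cite{DIJ}, but two justifications should be replaced.
\begin{enumerate}
\item A torsion class is not in general $\sfT$ of the brick labels of its Hasse arrows: the torsion class of preinjective Kronecker modules has no lower covers. In the brick finite case this recovery is normally deduced from finiteness of $\tors A$, which is what you are trying to prove. Use instead that every torsion class $\calT$ is generated by the bricks it contains. Indeed, if $0\ne X\in\calT$ is torsion-free for $\sfT(\text{bricks in }\calT)$, the image of a nonzero endomorphism of $X$ of minimal rank is a brick in $\calT$ that is also torsion-free, a contradiction. Hence $\calT\mapsto\{\text{bricks in }\calT\}$ is injective.
\item The join of $\Fac M$ and $\Fac M'$ is $\sfT(M\oplus M')$, not $\Fac(M\oplus M')$. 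Over $K(1\to 2)$ the join of $\Fac S_1$ and $\Fac S_2$ is $\mod A$, which contains $P_1\notin\Fac(S_1\oplus S_2)$. Moreover, joins of functorially finite torsion classes need not be functorially finite. So take a \emph{maximal} functorially finite torsion class inside $\calT$. It exists because $\ftors A$ is finite, and it is all the covering argument needs.
\end{enumerate}
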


Strengthen this, Demonet suggested the following conjecture.

\begin{Conj}\label{Conj_Demonet}\cite[Question 3.49]{Demonet}
Let $A$ be brick infinite.
Then there exists a lattice point 
$\theta \in K_0(\proj A)$ which is not rigid.
\end{Conj}

We prove that these two conjectures have the following relationship.

\begin{Thm}\label{Thm_Demonet_infin}
Assume that the base field $K$ is algebraically closed.
\begin{enumerate}
\item
Let $\theta \in K_0(\proj A)$ be a lattice point.
If $\theta$ is not rigid, 
then there exists an infinite semibrick $\calS$ 
such that $\ovcalT_\theta=\sfT(\calS)$.
\item
Demonet Conjecture \ref{Conj_Demonet} implies 
Enomoto Conjecture \ref{Conj_Enomoto}.
\end{enumerate}
\end{Thm}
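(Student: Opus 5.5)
For part (1), the plan is to combine the fact recalled in the introduction from \cite{A-wc} with Theorem \ref{Thm_ovT_lattice_bicompact}. Since $\theta\in K_0(\proj A)$ is a lattice point, \cite{A-wc} gives a semibrick $\calS$ with $\ovcalT_\theta=\sfT(\calS)$. I expect this $\calS$ to be the set of isoclasses of $\theta$-stable modules, that is, the simple objects of the wide subcategory $\calW_\theta$ (or a suitable subset of them). I will invoke the statement in exactly this form.

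It then remains to show that $\calS$ is infinite, and I will argue by contradiction. Suppose $\calS=\{S_1,\ldots,S_m\}$ is finite, and put $M:=\bigoplus_{i=1}^m S_i\in\mod A$. Then $\sfT(\calS)=\sfT(M)$, because the smallest torsion class containing a finite set equals the one containing the direct sum: torsion classes are closed under direct sums and direct summands, the latter being factor modules. Hence $\ovcalT_\theta=\sfT(M)$ is compact, which is condition (i) of Theorem \ref{Thm_ovT_lattice_bicompact}. That theorem requires $K$ algebraically closed and $\theta\in K_0(\proj A)$, and both hold here. It shows that (i) is equivalent to (a), so $\theta$ is rigid, contradicting the hypothesis. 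Therefore $\calS$ is infinite.

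For part (2), let $A$ be brick infinite. Demonet Conjecture \ref{Conj_Demonet} supplies a lattice point $\theta\in K_0(\proj A)$ that is not rigid. Part (1) then yields an infinite semibrick $\calS$, which is exactly the conclusion of Enomoto Conjecture \ref{Conj_Enomoto}.

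The main obstacle is citing the semibrick result from \cite{A-wc} precisely enough to be sure the generating set is a semibrick. The rest is formal. If that reference gave only some generating family rather than a semibrick, I would fall back on a direct construction. I would take the bricks $S$ that are simple objects in $\calW_\theta=\ovcalT_\theta\cap\calT_\theta^\perp$, which are pairwise Hom-orthogonal by Schur's lemma in the abelian category $\calW_\theta$. I would then check that $\ovcalT_\theta=\sfT(\calT_\theta\cup\calW_\theta)$. The difficulty is that $\calT_\theta$ is also needed, and that is exactly why I want the cited statement that $\ovcalT_\theta=\sfT(\calS)$ with $\calS$ a semibrick, which holds for lattice points.
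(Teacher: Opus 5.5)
Your proposal is correct and matches the paper's proof: take the semibrick $\calS$ from Proposition \ref{Prop_ovT_wide_gen}, note that if $\calS$ were finite then $\ovcalT_\theta=\sfT(\bigoplus_{S\in\calS}S)$ would be compact, and apply Theorem \ref{Thm_ovT_lattice_bicompact} (i)$\Rightarrow$(a) to contradict non-rigidity, after which (2) is immediate. Your guess about which semibrick $\calS$ is, and the fallback via simple objects of $\calW_\theta$, are not needed, since the argument uses only the existence statement of the cited proposition.
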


To show this, we use the following property.

\begin{Prop}\label{Prop_ovT_wide_gen}\cite[Theorem 2.23]{A-wc}
Let $\theta \in K_0(\proj A)$ be a lattice point.
Then the torsion class $\ovcalT_\theta$ has a semibrick $\calS$
such that $\ovcalT_\theta=\sfT(\calS)$.
\end{Prop}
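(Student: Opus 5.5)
The plan is to take $\calS$ to be the set of simple objects of the $\theta$-semistable subcategory $\calW_\theta=\ovcalT_\theta \cap (\calT_\theta)^\perp$. This category is wide: it is closed under kernels, cokernels and extensions by the usual King-type argument. By Ringel's bijection between semibricks and wide subcategories, its simple objects form a semibrick $\calS$ with $\Filt \calS=\calW_\theta$. Since $\calW_\theta \subset \ovcalT_\theta$, we get $\sfT(\calS) \subset \ovcalT_\theta$ for free. Everything therefore reduces to the inclusion $\ovcalT_\theta \subset \sfT(\calS)$.

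I will first reduce this to the strictly stable part. For $M \in \ovcalT_\theta$, let $tM$ be the torsion part of $M$ with respect to $(\calT_\theta,\ovcalF_\theta)$. Then $M/tM$ is a factor of $M$, so it lies in $\ovcalT_\theta$, and it also lies in $(\calT_\theta)^\perp$; hence $M/tM \in \calW_\theta=\Filt \calS$. Because $\sfT(\calS)$ is extension closed, it suffices to prove
\begin{align*}
\calT_\theta \subset \sfT(\calS)=\sfT(\calW_\theta).
\end{align*}
Equivalently, every nonzero $X \in \calT_\theta \cap \calS^\perp$ must vanish. This is exactly where integrality is needed. For a generic real $\theta$ one has $\calW_\theta=0$ while $\calT_\theta$ can be nonzero, so the statement fails without the lattice hypothesis.

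For the key inclusion I would use Proposition \ref{Prop_Fei}. Given $X \in \calT_\theta \subset \ovcalT_\theta$, choose $l \ge 1$ and $f \in \Hom(l\theta)$ with $X \in \ovcalT_f={^\perp(\Ker \nu f)}$. Note that $\calW_{l\theta}=\calW_\theta$ and $\ovcalT_{l\theta}=\ovcalT_\theta$. Write $P_f$ for the two-term complex $P_1^{l\theta} \xrightarrow{f} P_0^{l\theta}$. By the Auslander--Reiten formula, the condition $\Hom_A(X,\Ker \nu f)=0$ is equivalent to $\Hom(P_f,X[1])=0$. I would then exploit the fact that $\ovcalT_f$ is a functorially finite torsion class contained in $\ovcalT_\theta$. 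Its Ext-projectives, or the minimal ``bricks at the top'' of $\ovcalT_f$ relative to the smaller class $\calT_\theta$, should be forced to have $\theta$-value $0$. The reason is integrality: $\theta$ takes integer values on $K_0(\mod A)$, and the $\theta$-values of objects of $\ovcalT_f$ are pinned down through $\langle [P_f],-\rangle=l\,\theta$. Concretely, I would take a nonzero $X \in \calT_\theta\cap\calS^\perp$ of minimal dimension. Then I would apply the brick-labelling of the covering relations in the lattice $[\calT_\theta,\ovcalT_\theta]\cap[\ovcalT_f\cap\calT_\theta,\ovcalT_f]$ to produce a brick $B$ that is a factor of an object of $\ovcalT_f$ and satisfies $\theta(B)=0$. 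Such a $B$ lies in $\calW_\theta$ and admits a nonzero map from $X$'s filtration, which would contradict $X\in \calS^\perp$.

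The main obstacle is this last step, namely showing that the gap between $\calT_\theta$ and $\ovcalT_\theta$ is filled by bricks of $\theta$-value zero, so that it is generated by $\calW_\theta$ and not just by $\calT_\theta$ itself. My first attempt will compare the two torsion pairs $(\ovcalT_f,\Sub \Ker\nu f)$ and $(\calT_\theta,\ovcalF_\theta)$. The aim is to show that the torsion-free part of $X$ with respect to $\sfT(\calW_\theta)$ lies in $\ovcalF_\theta \cap \ovcalT_\theta=\calW_\theta$, forcing it to be zero. Integrality of $l\theta$ enters through the identity $\theta(Y)=\frac1l\bigl(\dim\Hom(P_f,Y)-\dim\Hom(P_f,Y[1])\bigr)$. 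Since $\Hom(P_f,Y[1])=0$ for $Y\in\ovcalT_f$, this reads $\theta(Y)=\frac1l\dim\Hom(P_f,Y)$, and I would use it to detect semistable factors.
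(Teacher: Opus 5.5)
The reduction from $\ovcalT_\theta$ to $\calT_\theta$ is correct, but the semibrick you chose is wrong. The simple objects of $\calW_\theta$ do not generate $\ovcalT_\theta$ in general, even for lattice points.

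Take $\theta=\sum_{i=1}^n[P(i)]\in K_0(\proj A)$. Then $\theta(M)>0$ for every nonzero $M$, so $\calT_\theta=\ovcalT_\theta=\mod A$ and $\calW_\theta=\ovcalT_\theta\cap(\calT_\theta)^\perp=0$. Hence $\sfT(\calW_\theta)=0\neq\ovcalT_\theta$. More generally, every lattice point $\theta\in C^\circ(T)$ with $T\in\twosilt A$ and $T\neq A[1]$ has $\calW_\theta=0$, while $\ovcalT_\theta=\calT_\theta=\Fac H^0(T)\neq 0$. So the inclusion $\calT_\theta\subset\sfT(\calW_\theta)$ to which you reduce everything is false. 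The phenomenon you attribute to non-lattice $\theta$ occurs at lattice points too, and integrality cannot rescue it.

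In fact the semibrick is forced. Since $\sfT$ is injective on wide subcategories \cite{MS}, and semibricks correspond to wide subcategories, $\calS$ must be the set of simple objects of the wide subcategory $\{X\in\ovcalT_\theta\mid \Ker g\in\ovcalT_\theta \text{ for all } g\colon Y\to X \text{ with } Y\in\ovcalT_\theta\}$. This contains $\calW_\theta$, but in general it also contains bricks of positive $\theta$-value. For $\theta=\sum_i[P(i)]$ it is exactly the set of simple modules. Your closing plan is to show that the bricks filling the gap between $\calT_\theta$ and $\ovcalT_\theta$ must have $\theta$-value $0$. That plan is aimed at a false statement: the $\calT_\theta$-part generally needs bricks with $\theta(S)>0$ to be generated.

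There is also a false auxiliary claim. The class $\ovcalT_f={^\perp(\Ker\nu f)}$ is cocompact (Example \ref{Ex_left_perp}), but it need not be functorially finite, so its Ext-projectives are not available. For example, take the Kronecker algebra and $\theta$ primitive on the non-rigid ray. Every nonzero $f\in\Hom(\theta)$ is a presentation of a regular simple $R_\lambda$, so $\Ker\nu f=\tau R_\lambda=R_\lambda$. Then ${^\perp R_\lambda}$ contains the regular modules of all other tubes, so it is not of the form $\Fac M$.

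Beyond this, your last two paragraphs offer only heuristics (``should be forced'', ``I would''), with no actual mechanism. For the record, the paper does not reprove this proposition; it imports it from \cite[Theorem 2.23]{A-wc}.
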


Theorem \ref{Thm_Demonet_infin} is proved as follows.

\begin{proof}[Proof of Theorem \ref{Thm_Demonet_infin}]
(1)
By Proposition \ref{Prop_ovT_wide_gen},
there exists a semibrick $\calS$ such that 
$\ovcalT_\theta=\sfT(\calS)$.
Assume that the semibrick $\calS$ is finite. 
Then $\ovcalT_\theta$ is compact.
Since $K$ is algebraically closed and $\theta \in K_0(\proj A)$,
Theorem \ref{Thm_ovT_lattice_bicompact} implies that $\theta$ is rigid.
This contradicts our assumption.
Thus $\calS$ must be infinite.

(2) is immediate from (1).
\end{proof}

We end this paper with explaining some more relationship
between semibricks and torsion classes.

There exist a bijection from $\sbrick A$ of semibricks to
the set $\wide A$ of \emph{wide subcategories} of $\mod A$,
that is, full subcategories closed under kernels, cokernels and extensions
\cite[1.2]{Ringel1}.
The map sends each semibrick $\calS \in \sbrick A$
to $\Filt \calS \in \wide A$.

To each $\calW \in \wide A$,
associating the smallest torsion class $\sfT(\calW)$ containing $\calW$
defines a map $\sfT \colon \wide A \to \tors A$.
Marks-\v{S}\v{t}ov\'{i}\v{c}ek showed that $\sfT$ is injective
\cite[Proposition 3.3]{MS}.

As in \cite{AP},
$\calT \in \tors A$ is said to be \emph{widely generated}
if there exists $\calW \in \wide A$ such that $\calT=\sfT(\calW)$;
or equivalently, there exists $\calS \in \sbrick A$ 
such that $\calT=\sfT(\calS)$.
By \cite[Proposition 5.4]{AS} based on \cite[Corollary 4.10]{Sentieri},
$A$ is brick finite if and only if every $\calT$ is widely generated.

Recently, Ringel showed that, for any module $M \in \mod A$,
there exists a finite semibrick $\calS$ such that $\sfT(M)=\sfT(\calS)$
in \cite[Proposition 5.5]{Ringel2}.
Thus a compact torsion class is always widely generated.

As a consequence of these studies and \cite{DIJ}, 
we have the following characterization of brick finite algebras.

\begin{Cor}\label{Cor_brick_infin_tors}
For a finite dimensional algebra $A$, the following conditions are equivalent.
\begin{enumerate2}
\item
The algebra $A$ is brick finite.
\item
The set $\tors A$ is finite.
\item
Every $\calT \in \tors A$ is functorially finite.
\item
Every $\calT \in \tors A$ is bicompact.
\item
Every $\calT \in \tors A$ is compact.
\item
Every $\calT \in \tors A$ is widely generated.
\end{enumerate2}
\end{Cor}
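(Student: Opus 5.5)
I would prove the cycle of implications (a)$\Rightarrow$(b)$\Rightarrow$(c)$\Rightarrow$(d)$\Rightarrow$(e)$\Rightarrow$(f)$\Rightarrow$(a), plus the side equivalences, using the results quoted above. Here (f) denotes ``every $\calT$ is widely generated''.

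For (a)$\Rightarrow$(b), brick finiteness gives that $\twosilt A$ is finite by Proposition \ref{Prop_brick_infin}. I also need every torsion class to be functorially finite, which is \cite{DIJ}: $\tors A$ is finite if and only if $\twosilt A$ is finite, and in that case $\tors A=\ftors A$. Then Proposition \ref{Prop_AIR} gives a bijection $\twosilt A\to\ftors A=\tors A$, so $\tors A$ is finite. For (b)$\Rightarrow$(c), I again use \cite{DIJ}: if $\tors A$ is finite, every torsion class is functorially finite. If I want to avoid citing this twice, I can instead use $\tors A=\ftors A$ from the first step directly.

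The next steps are short.
\begin{itemize}
\item (c)$\Rightarrow$(d) is Proposition \ref{Prop_Smalo}.
\item (d)$\Rightarrow$(e) is trivial.
\item (e)$\Rightarrow$(f): Ringel's result \cite[Proposition 5.5]{Ringel2} shows that each compact torsion class $\sfT(M)$ equals $\sfT(\calS)$ for a finite semibrick $\calS$, so it is widely generated.
\item (f)$\Rightarrow$(a) is exactly \cite[Proposition 5.4]{AS}: $A$ is brick finite if and only if every torsion class is widely generated.
\end{itemize}

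For a proof internal to this paper, I would also give a direct (d)$\Rightarrow$(a) or (e)$\Rightarrow$(a). Suppose $A$ is brick infinite. By Proposition \ref{Prop_brick_infin}, there is a $\theta\in K_0(\proj A)_\R$ that is not rigid. By Theorem \ref{Thm_semistable_bicompact} (c)$\Rightarrow$(a), the torsion class $\calT_\theta$ is then not compact. This contradicts (e), and hence also (d). This argument works over an arbitrary field and avoids \cite{AS}.

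The only delicate point is the first step: identifying $\tors A$ with $\ftors A$ when $\twosilt A$ is finite. I would quote \cite{DIJ} for this rather than reprove it. If I wanted a self-contained route, I would proceed as follows. Suppose a torsion class $\calT$ is not functorially finite. Then the functorially finite torsion classes contained in $\calT$ form a set with no maximal element. So they give an infinite strictly increasing chain in $\ftors A$, contradicting the finiteness of $\twosilt A$. The genuinely nontrivial input there is the fact, from \cite{DIJ}, that such a maximal element would coincide with $\calT$.
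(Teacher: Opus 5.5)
Your proposal is correct and follows essentially the same route as the paper: (a)--(c) via \cite{DIJ}, (c)$\Rightarrow$(d) by Proposition \ref{Prop_Smalo}, (d)$\Rightarrow$(e) trivially, (e)$\Rightarrow$(f) by \cite[Proposition 5.5]{Ringel2}, and (f)$\Rightarrow$(a) by \cite[Proposition 5.4]{AS}. Your extra direct proof of (e)$\Rightarrow$(a), combining Proposition \ref{Prop_brick_infin} with Theorem \ref{Thm_semistable_bicompact} (c)$\Rightarrow$(a), is valid over an arbitrary field and uses the paper's own results, although the paper does not include it.
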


\begin{proof}
(a), (b) and (c) are equivalent 
by \cite[Corollary 2.9, Theorems 3.8, 4.2]{DIJ}.
They are equivalent to (f) by \cite[Proposition 5.4]{AS}.
(c)$\Rightarrow$(d) is Proposition \ref{Prop_Smalo},
(d)$\Rightarrow$(e) is clear, and
(e)$\Rightarrow$(f) is \cite[Proposition 5.5]{Ringel2}.
\end{proof}

\end{document}